\newcommand{\smat}[1]{ \left[\begin{smallmatrix} #1 \end{smallmatrix}\right]}
\def\setC{\,\mathbb{C}}
\def\setR{\mathbb{R}}
\def\setZ{\mathbb{Z}}
\def\setP{\mathbb{P}}
\newtheorem{rumatemppu}{Example}
\newenvironment{example}{\begin{rumatemppu}\rm}{\end{rumatemppu}}
\newtheorem{rumatemppuu}{Algorithm}
\title{THE PRODUCT OF MATRIX SUBSPACES}
\author{
Marko Huhtanen\thanks{
Department of Mathematics and Systems Analysis,
Aalto University, 
Box 1100
FIN-02015,
Finland,
({\tt Marko.Huhtanen@hut.fi}). Supported by the Academy of Finland.}
}
\begin{document}
\maketitle
\begin{abstract} 
In factoring matrices into the product of two matrices  
operations are  typically performed with elements restricted to matrix subspaces.
Such  modest structural assumptions are realistic, for example, 
in large scale computations.
This paper is concerned with analyzing associated matrix
geometries. 
Curvature of the product of two matrix subspaces
is assessed.
As an analogue of the internal Zappa-Sz\'ep product 
of a group, 
the  notion of factorizable matrix subspace arises.
Interpreted in this way, several classical instances
are encompassed by this  structure.
The Craig-Sakamoto theorem fits naturally into this framework.

\end{abstract}
\begin{keywords} operator factoring, product of matrix subspaces, curvature,
irreducible matrix subspace 
\end{keywords}

\begin{AMS} 
15A30, 47L05
\end{AMS}

\pagestyle{myheadings}
\thispagestyle{plain}

\markboth{MARKO HUHTANEN   
}{PRODUCT OF MATRIX SUBSPACES} 

\section{Introduction}  
This paper is concerned with the set consisting of
matrix products of elements restricted to
 matrix subspaces 
$\mathcal{V}_1$ and $\mathcal{V}_2$ 
of $\setC^{n \times n}$ over $\setC$ (or $\setR$). 
Matrix subspaces appear regularly in 
large scale computational problems where only
modest structural assumptions can be made.\footnote{In large
scale problems, a matrix subspace typically defined by fixing 
a sparsity structure with $O(n)$ nonzero entries.}  
For example,
factorization problems \cite{WU,LA,PHA}
are typically  subspace problems.
The study of matrix subspaces can be classified as being 
finite dimensional operator space theory. For operator space
theory, see \cite{EFF,VPA}.
Regarding the geometry associated with matrix factoring \cite{HR,BHU},
the  set of products is defined as   
$$
\mathcal{V}_1\mathcal{V}_2=\{V_1V_2\,:\, V_1\in\mathcal{V}_1 \,
\mbox{ and }\, V_2\in\mathcal{V}_2 \}.\footnote{In algebraic geometry,
when $\mathcal{V}_1$
and $\mathcal{V}_2$ are treated as projective Hilbert spaces,
the tensor product is the more usual object  of interest  
(given by the Segre map).}
$$
As illustrated by the LU factorization, both complete and incomplete,  
as well as the singular value decomposition,
this structure is  ubiquitous. 
(The subset of $\setC^{n \times n}$ of
 matrices of rank $k$ at most is also the product
of two matrix subspaces of $\setC^{n \times n}.$) 
These examples also underscore  how different aims
and  geometries the set of products can have.

The set of products is not the most natural structure
from the computational view-point of factoring.\footnote{Being computationally far
more accessible, it is likely that the set
$\mathcal{V}_1\,{\rm Inv }(\mathcal{V}_2)=\{V_1V_2^{-1}\,:\, V_1\in
\mathcal{V}_1 \,
\mbox{ and }\, V_2\in \mathcal{V}_2 \cap {\rm GL}(n,\setC)\}$
is more important; see
\cite{HR,BHU}.}
It is encountered regularly, though.
Through the LU factorization, band matrices is one such instance, even though
it is exceptional by being a matrix subspace. 
In general, the set of products is constructible and certainly not flat.
Thereby one is led
to ask how curved $\mathcal{V}_1\mathcal{V}_2$ 
is. By locally inspecting the image of the smooth map 
\begin{equation}\label{matpro}
(V_1,V_2)\longmapsto V_1V_2,
\end{equation}
this can be assessed Riemannian geometrically.
This approach turns out to have a global character, leading to a 
necessary and sufficient condition for
the flatness of the whole image of \eqref{matpro}.

Vanishing curvature is intriguing  by the fact that
then the associated matrix subspace factors, i.e.,
it is not a so-called irreducible matrix subspace. 
And conversely, it is a fundamental problem to
recover whether a given matrix subspace is irreducible.
Once interpreted this way, it becomes clear that this notion has many 
appearances
already in the commutative case, such as integer factorization and polynomial factoring.
In the noncommutative case of $\dim \mathcal{V}_1=\dim \mathcal{V}_2=2$, 
an appropriate general treatment of the Craig-Sakamoto theorem in statistics is shown to correspond to such an instance.

The paper is organized as follows. In Section \ref{CS} the product of 
matrix subspaces is defined. 
Closedness is addressed
and some fundamental linear algebraic notions
are recalled. 
The structure is
illustrated with  several examples. 
Through the Craig-Sakamoto theorem,
special attention is paid to 
the three dimensional case. 
Bihomogeneous 
polynomial maps of bidegree $(1,1)$ are associated 
with the problem.
In Section \ref{GPMS} the notion of factorizable matrix subspace is introduced.
The curvature of the set of products is assessed locally. Based on this,
a necessary and  sufficient condition for
the flatness of the set of products is given.

\section{The product of two matrix subspaces}\label{CS}  
The product of two matrix subspaces is defined and exemplifed.
Special attention is paid to
a generalization of
the Craig-Sakamoto theorem. 
Bihomogeneous polynomial maps are associated with 
the product of two matrix subspaces.

\subsection{Matrix subspaces and the Craig-Sakamoto theorem}
Assume $\mathcal{V}$ is a matrix subspace of $\setC^{n \times n}$ 
over $\setC$ (or $\setR$). 
Depending on whether $\mathcal{V}$  contains invertible 
elements, the matrix subspace  
is called either nonsingular or singular \cite{HR}.
Generically, a matrix subspace is nonsingular \cite{HR2}. 
Then 
its subset consisting of invertible elements
is open and dense \cite{HR}.

Two matrix subspaces $\mathcal{V}$ and $\mathcal{W}$ are said to be 
equivalent
if 
$\mathcal{W}=X\mathcal{V}Y^{-1}$
holds  for invertible matrices
$X,Y\in \setC^{n \times n}$. In view of their properties, equivalent matrix
subspaces can in many ways be regarded as being indistinguishable.

\begin{definition}\label{prods}
Suppose $\mathcal{V}_1$ and $\mathcal{V}_2$ are   matrix subspaces 
of $\setC^{n \times n}$ over $\setC$ (or $\setR$). Then 
$$\mathcal{V}_1\mathcal{V}_2=\{V_1V_2\,:\, V_1\in\mathcal{V}_1 \,
\mbox{ and }\, V_2\in\mathcal{V}_2 \}
$$
is said to be the set of  products of  
$\mathcal{V}_1$ and
$\mathcal{V}_2$.
\end{definition}

Clearly,  we have a  homogeneous set,
i.e., there holds $t  \,\mathcal{V}_1\mathcal{V}_2=\mathcal{V}_1\mathcal{V}_2$
for any nonzero scalar $t$. 
However, unlike a matrix subspace, 
the set of products need not be closed.
(Consider, for example, the LU factorization.)
Closedness is certainly of importance, e.g., in 
stable numerical computations.\footnote{It is well-known
that the compution of an LU factorization is not a stable
process unless one uses partial pivoting.}

\begin{theorem}\label{sutu} 
Assume  $\mathcal{V}_1$ and
$\mathcal{V}_2$ are matrix subspaces of 
$\setC^{n \times n}$ over $\setC$ (or $\setR$) such that
$V_1V_2=0$ if and only if either $V_1=0$ or $V_2=0$.
Then $\mathcal{V}_1\mathcal{V}_2$ is closed.\footnote{A natural related problem is as follows.
Assume  a matrix subspace $\mathcal{V}_1$ of $\setC^{n \times n}$  over $\setC$ (or $\setR$) is given.
Find a matrix subspace $\mathcal{V}_2$ of $\setC^{n \times n}$  over $\setC$ (or $\setR$)  
of the largest possible dimension such that the assumptions of
Theorem \ref{sutu} are satisfied.}
\end{theorem}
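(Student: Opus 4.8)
The plan is to use the homogeneity of $\mathcal{V}_1\mathcal{V}_2$ to reduce closedness to a compactness statement. Fix any norm $\|\cdot\|$ on $\setC^{n\times n}$ and set $S_i=\{V\in\mathcal{V}_i:\|V\|=1\}$ for $i=1,2$. Each $S_i$ is closed and bounded in the finite dimensional space $\mathcal{V}_i$, hence compact, so that
$$
K=\{\,U_1U_2\,:\,U_1\in S_1,\ U_2\in S_2\,\}
$$
is compact as the continuous image of $S_1\times S_2$ under matrix multiplication. The hypothesis enters at exactly one point: $0\notin K$, since $U_1U_2=0$ with $U_1\in S_1$ and $U_2\in S_2$ would force $U_1\neq0$ and $U_2\neq0$, contradicting the assumption. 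Hence $A\mapsto\|A\|$ attains on the compact set $K$ a strictly positive minimum $m$ and a finite maximum $M$.

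Next I would record the factorization identity $\mathcal{V}_1\mathcal{V}_2=\{0\}\cup\{tA:t>0,\ A\in K\}$. The inclusion $\subseteq$ holds because $0=0\cdot0$ and, for nonzero $V_1\in\mathcal{V}_1$, $V_2\in\mathcal{V}_2$, one has $V_1V_2=\bigl(\|V_1\|\,\|V_2\|\bigr)\bigl(V_1/\|V_1\|\bigr)\bigl(V_2/\|V_2\|\bigr)$ with $\|V_1\|\,\|V_2\|>0$; the inclusion $\supseteq$ holds because $tU_1U_2=(tU_1)U_2\in\mathcal{V}_1\mathcal{V}_2$. It then remains to prove that the ``positive cone'' $C=\{0\}\cup\{tA:t>0,\ A\in K\}$ is closed. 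Given a convergent sequence $W_k=t_kA_k\to W$ with $t_k>0$ and $A_k\in K$, I would argue as follows. If $W=0$, then $W\in C$. If $W\neq0$, then $\|W_k\|=t_k\|A_k\|$ together with $m\le\|A_k\|\le M$ forces $t_k=\|W_k\|/\|A_k\|$ to lie, for all large $k$, in a compact subinterval of $(0,\infty)$; passing to a subsequence we may assume $t_k\to t>0$ and, by compactness of $K$, $A_k\to A\in K$, whence $W=tA\in C$. Since $C=\mathcal{V}_1\mathcal{V}_2$ by the identity above, this proves the theorem, and the argument is verbatim the same over $\setR$ and over $\setC$.

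I do not anticipate a genuine obstacle; the one point that requires care is the strict positivity of $m$ — that is, that the normalized products $U_1U_2$ stay uniformly away from the origin — and this is precisely what the hypothesis $V_1V_2=0\iff(V_1=0\text{ or }V_2=0)$ delivers. It is instructive that closedness fails without it: for the LU factorization, with $\mathcal{V}_1$ the lower and $\mathcal{V}_2$ the upper triangular matrices, the matrix units satisfy $E_{11}E_{nn}=0$ (for $n\ge2$) with both factors nonzero, so $0\in K$, the cone description collapses, and $\mathcal{V}_1\mathcal{V}_2$ is in fact not closed.
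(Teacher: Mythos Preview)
Your argument is correct. The only cosmetic gap is that a sequence in $C$ might contain some zero terms, which you do not explicitly allow when you write $W_k=t_kA_k$ with $t_k>0$; but if $W\neq 0$ one may discard the finitely many zeros, and if $W=0$ there is nothing to prove, so this is harmless.

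The paper's proof rests on the same idea---bihomogeneity plus compactness---but packages it differently. There the hypothesis is used to show that $(V_1,V_2)\mapsto V_1V_2$ descends to a well-defined continuous map $\mathbf{P}(\mathcal{V}_1)\times\mathbf{P}(\mathcal{V}_2)\to\mathbf{P}(\setC^{n\times n})$ between compact (Hausdorff) spaces, and the closed map lemma gives closedness of the image; pulling back along the quotient $\setC^{n\times n}\setminus\{0\}\to\mathbf{P}(\setC^{n\times n})$ and adjoining $0$ yields the conclusion. Your unit spheres $S_1,S_2$ are an affine substitute for the projective factors, your compact set $K$ plays the role of the projective image, and your cone argument replaces the pull-back step. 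The trade-off is that the paper's version is shorter and more conceptual, while yours is more elementary---it avoids projective spaces and the closed map lemma entirely and makes the crucial quantitative point (the uniform lower bound $m>0$ on $\|U_1U_2\|$) completely explicit.
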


\begin{proof} Consider the matrix product 
$$(V_1,V_2)\longmapsto V_1V_2$$
with $V_1\in \mathcal{V}_1$ and $V_2\in \mathcal{V}_2$.
For any scalars $s$ and $t$ we have $(sV_1,tV_2)\mapsto stV_1V_2$.
Thereby, because of the assumptions, 
we can regard the matrix product as a map from the product of projective spaces
$\mathbf{P}(\mathcal{V}_1) \times \mathbf{P}(\mathcal{V}_2)$ to 
the projective space $\mathbf{P}(\setC^{n \times n})$. This is a map from a
 compact space to a compact space.
Consequently, by the closed map lemma
\cite[Lemma 4.25]{leetm}, the image of 
$\mathbf{P}(\mathcal{V}_1) \times \mathbf{P}(\mathcal{V}_2)$ is closed.
\end{proof}

For the computation of 
$$\{(V_1,V_2)\in \mathcal{V}_1\times \mathcal{V}_2\,:\, V_1V_2=0\},$$
see \cite{FES}.

\smallskip

\begin{example} Let $\mathcal{V}_1$ be the set of circulant matrices and $\mathcal{V}_2$ the
set of diagonal matrices
in $\setC^{n \times n}$.
By Theorem \ref{sutu}, then $\mathcal{V}_1\mathcal{V}_2$ is closed.
It is readily seen that now the matrix product corresponds to 
the so-called Segre map,
a fundamental family of functions in 
algebraic geometry.
For the Segre maps, see, e.g., \cite[p.25]{HA}.
(In other words, 
the Kronecker product of two vectors
is really just an instance of the standard matrix
product restricted to prescribed matrix subspaces.)
The above theorem yields
a natural generalization of such  maps.
\end{example}

\smallskip

\begin{example}\label{ranki}
Certainly, the conditions of Theorem \ref{sutu} are not necessary. 
For instance, the
subset of $\setC^{n \times n}$ consisting of matrices of rank $k$ at most is 
closed, a property of tremendous importance in approximating
with the singular value decomposition. 
It equals the set of products $\mathcal{V}_1 \mathcal{V}_2$, where
$\mathcal{V}_1$ (resp. $\mathcal{V}_2$)
is the matrix subspace of $\setC^{n \times n}$ 
having the last $n-k$ columns (resp. rows) zeros.
\end{example}

\smallskip

The following example supports the viewpoint that the set of products
yields an equally natural ``discretization'' of Toeplitz
operators as Toeplitz  matrices do.  

\smallskip

\begin{example}\label{tope}
For an infinite dimensional noncommutative example,
for invertible Toeplitz operators there is an upper-lower triangular
factored Toeplitz structure; see, e.g., \cite{LAX}. 
For Toeplitz matrices this product does not preserve Toeplitzness.
The structure remains closed, though.
\end{example}

\smallskip

\begin{corollary} Denote by $\mathcal{V}_1$ and 
$\mathcal{V}_2$ the subspaces of upper and lower triangular 
Toeplitz matrices of $\setC^{n \times n}$ over $\setC$. Then
$\mathcal{V}_1\mathcal{V}_2$ is closed. 
\end{corollary}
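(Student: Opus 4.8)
The plan is to check the hypothesis of Theorem \ref{sutu} for these $\mathcal{V}_1,\mathcal{V}_2$ and then quote the theorem. Let $N\in\setC^{n\times n}$ be the nilpotent shift with ones on the first superdiagonal and put $M=N^T$. Since $N^n=0$, the subspace $\mathcal{V}_1$ of upper triangular Toeplitz matrices is precisely $\{p(N):p\in\setC[x]\}$, with basis $I,N,\dots,N^{n-1}$, and $\mathcal{V}_2=\{q(M):q\in\setC[x]\}$; moreover $p(N)=0$ exactly when $x^n\mid p$, so a nonzero element of $\mathcal{V}_1$ corresponds to a nonzero polynomial of degree at most $n-1$, and similarly for $\mathcal{V}_2$.

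So suppose $p(N)q(M)=0$ with $p(N)\neq0$ and $q(M)\neq0$; I want a contradiction. Factor $p(x)=x^r u(x)$ and $q(x)=x^s v(x)$ with $u(0)\neq0$ and $v(0)\neq0$. Then $r$ is the order of vanishing of $p$ at $0$, so $r\le\deg p\le n-1$, and likewise $s\le n-1$. Because $u(0)\neq0$ and $N$ is nilpotent, $u(N)$ is invertible (its inverse is again a polynomial in $N$), and similarly $v(M)$ is invertible. Using $p(N)=u(N)N^r$ and $q(M)=M^s v(M)$ we get
\[
0=p(N)q(M)=u(N)\,N^rM^s\,v(M),
\]
and multiplying on the left by $u(N)^{-1}$ and on the right by $v(M)^{-1}$ yields $N^rM^s=0$.

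It then remains to observe that $N^rM^s\neq0$ whenever $0\le r,s\le n-1$, which contradicts the last identity. This is immediate from an entry computation: $(N^r)_{ik}=1$ iff $k=i+r$ (with $1\le k\le n$) and $(M^s)_{kj}=1$ iff $k=j+s$, so $(N^rM^s)_{ij}=1$ precisely when some $k\in\{1,\dots,n\}$ satisfies $k=i+r=j+s$, and such $i,j,k$ exist because $\max(r,s)+1\le n$. Hence $V_1V_2=0$ forces $V_1=0$ or $V_2=0$, and Theorem \ref{sutu} gives that $\mathcal{V}_1\mathcal{V}_2$ is closed.

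The whole argument is elementary; the only slightly delicate point is the cancellation step, where invertibility of $u(N)$ and $v(M)$ is exactly what lets one discard the two non-commuting outer factors and reduce the noncommutative identity $p(N)q(M)=0$ to the purely combinatorial statement $N^rM^s=0$. One must also keep track of the degree bounds $r,s\le n-1$, since it is precisely there that $N^rM^s$ fails to vanish.
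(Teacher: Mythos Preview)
Your proof is correct, and at the strategic level it coincides with the paper's: both verify the hypothesis of Theorem~\ref{sutu} (that $V_1V_2=0$ forces $V_1=0$ or $V_2=0$) and then invoke that theorem. The difference lies in how that hypothesis is checked. The paper argues by a bare-hands row induction: the last row of $V_1$ has a single nonzero entry (the diagonal constant $a_0$), so the last row of $V_1V_2$ is $a_0$ times the last row of $V_2$, forcing $a_0=0$ or $V_2=0$; once $a_0=0$ the next-to-last row plays the same role, and so on upward until $V_1=0$. Your argument is more algebraic: representing $V_1=p(N)$, $V_2=q(M)$ as polynomials in the two shifts, you peel off invertible factors $u(N),v(M)$ to reduce the question to the single combinatorial fact $N^rM^s\neq 0$ for $r,s\le n-1$. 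Your route is slightly longer to write out but arguably more transparent about \emph{why} the cancellation works (units in the commutative algebras $\setC[N]$ and $\setC[M]$), and it would transfer verbatim to any pair of commuting nilpotents in place of $N$ and $M$; the paper's row computation is quicker but more tied to the explicit Toeplitz structure.
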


\begin{proof} 
Consider the equation $V_1V_2=0$. In the product, compute the last row first to 
have either the diagonal of  $V_1$ zero or $V_2=0$. Then proceed analogously upwards
by computing next to the last row, to have the claim by using Theorem
\ref{sutu}. 
\end{proof}

The set of products in this corollary is intriguing since 
both subspaces are invertible.\footnote{Let $\mathcal{V}$ and $\mathcal{W}$ be two nonsingular matrix subspaces
of $\setC^{n \times n}$ over $\setC$ (or $\setR$). 
If $$
\{ V^{-1}\,:\,V\in\mathcal{V}\cap {\rm GL}(n,\setC)\}
=\mathcal{W}\cap {\rm GL}(n,\setC),$$
then  $\mathcal{V}$ is said to be invertible \cite{HR}.}
 Thereby it is a straightforward computation
to recover if a given matrix $A\in \setC^{n \times n}$ belongs
to $\mathcal{V}_1\mathcal{V}_2$ \cite{HR}. 
The closure of the set of inverses of invertible elements is the set of products
$\mathcal{V}_2\mathcal{V}_1$, which is closed  by analogous arguments.
Hence we have an 
elegant symmetry with respect to the inversion.\footnote{The problem
of characterizing the inverses of Toeplitz matrices has
been studied a lot; see, e.g., \cite{GO}.}

Although not readily determined,  the following
quantity appears to be of central relevance. To the best of our knowledge, it was initially introduced in \cite{BUSS}.
See also \cite{FAU} for related computations.

\begin{definition}
Let $\mathcal{V}$ be a matrix subspace of 
$\setC^{n \times n}$ over $\setC$ (or $\setR$).
Set $${\rm minrank}(\mathcal{V})=
\min_{V\in \mathcal{V},\,V\not=0}\,{\rm rank}(V).$$
\end{definition}

This quantity, let us call it the minrank of $\mathcal{V}$, 
is preserved under 
equivalence. In particular, in the nonsingular case of $\dim \mathcal{V}=2$,  
the geometric multiplicities of the eigenvalues of a matrix 
determine ${\rm minrank}(\mathcal{V})$. Namely, let
$\mathcal{V}={\rm span}\{V_1,V_2\}$ with $V_1,V_2\in \setC^{n \times n}$.
Suppose $X=I$ 
and $Y$ is an invertible element
of $\mathcal{V}$. Then 
\begin{equation}\label{ekvi}
X \mathcal{V}Y^{-1}= 
{\rm span}\{I,W_1\},
\end{equation} 
so that it suffices to inspect the eigenvalues of $W_1 \in \setC^{n \times n}$
to determine the minrank of $\mathcal{V}$.

\smallskip

\begin{example}\label{radonhu} The so-called Hurwitz-Radon 
matrix subspace $\mathcal{V}$ of 
$\setC^{n \times n}$ over $\setR$ has the property that
any nonzero element is a scalar multiple of a unitary matrix.
Hence, then we have ${\rm minrank}(\mathcal{V})=n$.
\end{example}

\smallskip

Using Proposition \ref{sutu} we
can conclude that, whenever 
$${\rm minrank}(\mathcal{V}_1)+{\rm minrank}(\mathcal{V}_2)>n,$$
then $\mathcal{V}_1\mathcal{V}_2$ is closed. 

The set of products 
need not be somehow curved
by the fact that its closure can retain the structure 
of matrix subspace.
Of course, we always have a subspace 
when either $\mathcal{V}_1$ or $\mathcal{V}_2$
is one dimensional.

\smallskip

\begin{example}\label{band}
 For a  matrix subspace admitting
a ``factorization'' as the product of two matrix subspaces,
consider the set of $(p,q)$-band 
matrices.\footnote{A $(p,q)$-band matrix is a square matrix with lower 
bandwidth $p$ and upper bandwidth $q$.}
Denote by $\mathcal{V}_1$ and $\mathcal{V}_2$ the set of $(p,0)$-band and 
$(0,q)$-band matrices.
Then there exists a factorization of a nonsingular $A\in \setC^{n \times n}$  as
$A=V_1V_2$
if and only if 
$A$ 
is a strongly nonsingular $(p,q)$-band matrix.
As strong singularity 
is a generic property, we can conclude that the closure of 
$\mathcal{V}_1\mathcal{V}_2$ 
is the set of $(p,q)$-band matrices, i.e., a matrix subspace.
\end{example}

\smallskip
 
Observe that between Examples \ref{ranki} and \ref{band} there is strong resemblance 
with regard to how the sparsity structures of the matrix subspaces are defined.

In statistics,  the so-called Craig-Sakamoto theorem\footnote{The Craig-Sakamoto theorem: 
Two real symmetric matrices  $X_1$ and $X_2$ satisfy 
$\det(I-tX_1-sX_2)= \det(I-tX_1) \det(I-sX_2)$ for all $t,s\in \setR$ if and only if
$X_1X_2=0$.}
 is similarly 
related with a factorization of a matrix subspace;
for details on the Craig-Sakamoto theorem, see \cite{DS}.
Besides statistics, this structure is of interest in  
studying 
the spectrum in the case of three dimensional 
matrix subspaces \cite{TA}.
It is, in essence, concerned with
the claim of the following proposition in the case 
of $k=2$ and $\dim \mathcal{V}_1=
\dim \mathcal{V}_2=1$.

\begin{proposition} Assume a matrix subspace $\mathcal{V}$ 
of $\setC^{n \times n}$ over $\setC$ can be
decomposed as 
$\mathcal{V}=\setC I +\sum_{j=1}^k\mathcal{X}_j$
with matrix subspaces $\mathcal{X}_j$ satisfying
$\mathcal{X}_j\mathcal{X}_{l}=\{0\}$ for $j<l$. Then
$\mathcal{V}$ equals the closure of $\prod_{j=1}^k(\setC I +\mathcal{X}_j).$
\end{proposition}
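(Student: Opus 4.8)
The plan is to establish two inclusions: $\prod_{j=1}^k(\setC I +\mathcal{X}_j)\subseteq \mathcal{V}$, which is immediate, and then that $\mathcal{V}$ lies in the closure of the product, which is the substantive direction. For the first inclusion, note that a product $\prod_{j=1}^k(t_j I + X_j)$ with $X_j\in\mathcal{X}_j$ expands, using the orthogonality relations $\mathcal{X}_j\mathcal{X}_l=\{0\}$ for $j<l$, into a sum in which every surviving term is a scalar multiple of $I$ or of some $X_j$ or of a product $X_{j_1}\cdots X_{j_r}$ with $j_1>j_2>\cdots>j_r$; in any case each such term lies in $\setC I+\sum_j\mathcal{X}_j=\mathcal{V}$, so the product map lands inside $\mathcal{V}$. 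Since $\mathcal{V}$ is a subspace it is closed, so the closure of the product is also contained in $\mathcal{V}$.

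For the reverse inclusion, I would argue that a dense subset of $\mathcal{V}$ is hit by the product map. Fix an arbitrary target $A = \lambda I + \sum_{j=1}^k X_j\in\mathcal{V}$ with $X_j\in\mathcal{X}_j$, and look for a preimage of a nearby point. The key observation is that after using the relations $\mathcal{X}_j\mathcal{X}_l=\{0\}$ for $j<l$, the product $\prod_{j=1}^k(t_j I + \varepsilon^{-1}Y_j)$ for suitable $Y_j\in\mathcal{X}_j$ can be made to match $A$ in the ``linear'' and ``constant'' parts up to a perturbation that vanishes as $\varepsilon\to 0$: writing the expansion, the coefficient of $I$ is $\prod_j t_j$, the coefficient contributing to $\mathcal{X}_j$ is $\bigl(\prod_{i\neq j}t_i\bigr)\varepsilon^{-1}Y_j$, and all remaining terms are products $X_{j_1}\cdots X_{j_r}$ with $r\geq 2$ and strictly decreasing indices, which carry a factor $\varepsilon^{-r}$ but also the remaining $t_i$'s. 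Choosing $t_j$ of size $\varepsilon$ and $Y_j=\varepsilon^{2-k}\,(\text{const})\,X_j$ (the exact bookkeeping is routine), one arranges that the constant and first-order parts equal exactly $\lambda I$ and the $X_j$'s, while the higher-order terms are $O(\varepsilon)$. Hence $A$ is a limit of points $V_1\cdots V_k$ with $V_j\in\setC I+\mathcal{X}_j$, so $A$ lies in the closure of $\prod_{j=1}^k(\setC I+\mathcal{X}_j)$.

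The main obstacle is getting the scaling bookkeeping in the second step to work simultaneously for all $k$ factors: one must choose the scalar parts $t_j$ and the perturbation scales so that \emph{every} cross term of order $\geq 2$ is suppressed while \emph{every} order-$0$ and order-$1$ term is reproduced exactly. I expect the clean way to handle this is induction on $k$: the case $k=1$ is trivial (the product is exactly $\setC I+\mathcal{X}_1$), and for the inductive step one writes $\mathcal{V}=\setC I+\mathcal{X}_1+\mathcal{W}$ where $\mathcal{W}=\sum_{j=2}^k\mathcal{X}_j$ and notes that $\mathcal{X}_1\mathcal{W}=\{0\}$, so that $(\setC I+\mathcal{X}_1)(\setC I +\mathcal{W})$ has closure $\setC I+\mathcal{X}_1+\mathcal{W}$ by the two-factor case, and $\setC I+\mathcal{W}=\setC I+\sum_{j=2}^k\mathcal{X}_j$ is handled by the induction hypothesis as the closure of $\prod_{j=2}^k(\setC I+\mathcal{X}_j)$. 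One then needs that composing the two limiting processes stays within the closure of the full product, which follows since the product map is continuous and the relevant sets are cones. This reduces everything to the genuinely two-factor computation, where the perturbation argument above is transparent.
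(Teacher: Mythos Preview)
Your argument has the indices reversed: when you expand $\prod_{j=1}^k(t_j I + X_j)$ in the natural left-to-right order, the mixed terms are products $X_{j_1}\cdots X_{j_r}$ with \emph{increasing} indices $j_1 < j_2 < \cdots < j_r$, not decreasing. This matters because the hypothesis $\mathcal{X}_j\mathcal{X}_l = \{0\}$ for $j<l$ kills precisely such increasing-index products, so every cross term vanishes identically and the expansion collapses to $(\prod_i t_i)\,I + \sum_j\bigl(\prod_{i\neq j} t_i\bigr)X_j$. Your claim that decreasing-index products lie in $\mathcal{V}$ is unsupported (nothing in the hypotheses places $X_2X_1$ inside $\setC I+\sum_j\mathcal{X}_j$), and with the correct index direction the first inclusion becomes immediate for the right reason: the cross terms are zero, not merely contained in $\mathcal{V}$.

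Once you see that there are no cross terms at all, the $\varepsilon$-scaling bookkeeping and the induction for the reverse inclusion are fighting a nonexistent enemy. The paper's proof is a single line: for $t \neq 0$ one has the exact factorization
\[
tI + X_1 + \cdots + X_k \;=\; (tI+X_1)\Bigl(I+\tfrac{X_2}{t}\Bigr)\cdots\Bigl(I+\tfrac{X_k}{t}\Bigr),
\]
verified by expanding and discarding the vanishing cross terms. Thus every element of $\mathcal{V}$ with nonzero scalar part already lies in the product set itself, and elements with $t=0$ are limits of such. Your approach can be repaired after fixing the index direction, but it then immediately collapses to this direct computation.
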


\begin{proof} 
Assume
$V=tI+X_1+\cdots +X_k\in \mathcal{V}$
 with $X_j \in \mathcal{X}_j$ and a scalar $t\not=0$.
(If $t=0$, then $V$ can be approximated with such an element.)
By the fact that $X_jX_l=0$ for $j<l$,  we can write
$V=(tI+X_1)(I+X_2/t)\cdots (I+X_k/t)$.
\end{proof}

The appearing condition on the matrix subspaces $\mathcal{X}_j$ 
forces them to be singular. 
It is noteworthy that this structure 
is also utilized
in computing (inverting) the 
$L$ factor in the LU decomposition of a matrix.

Let $\mathcal{V}_1$ and $\mathcal{V}_2$ be 
nonsingular matrix subspaces
of $\setC^{n \times n}$ over $\setC$ of dimension two.
(Due to the generalized eigenvalue problem, the two dimensional
case is possibly the most frequently encountered.) 
In view of the Craig-Sakamoto theorem, it is natural to ask when
the closure of  $\mathcal{V}_1\mathcal{V}_2$
is a matrix subspace. 
Here we may equally well consider equivalent matrix subspaces of the form 
\begin{equation}\label{tyoe}
X\mathcal{V}_1={\rm span}\{I,X_1\}\,
\mbox{ and }\,\mathcal{V}_2Y^{-1}={\rm span}\{I,X_2\}
\end{equation}
with invertible matrices
$X,Y \in \setC^{n \times n}$ chosen appropriately.
Then, instead of $\mathcal{V}_1\mathcal{V}_2$, it suffices to inspect
the set of products $X\mathcal{V}_1\mathcal{V}_2Y^{-1}$.

\begin{theorem}\label{jse} 
Let $\mathcal{V}_1$ and $\mathcal{V}_2$
be nonsingular matrix subspaces
of $\setC^{n \times n}$ over $\setC$ of dimension $2$. 
If the closure of   $\mathcal{V}_1\mathcal{V}_2$ is
a matrix subspace, then it is of dimension $3$ at most. 
This holds if and only if
the matrices in \eqref{tyoe} satisfy
\begin{equation}\label{frat}
X_1(cX_2-dI)=(aX_2-bI)
\end{equation}
for some constants $a,b,c,d\in \setC$  not all zero.
\end{theorem}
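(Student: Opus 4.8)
The plan is to work throughout with the equivalent subspaces $\mathcal{V}_1={\rm span}\{I,X_1\}$ and $\mathcal{V}_2={\rm span}\{I,X_2\}$ as in \eqref{tyoe}, which is legitimate since the closure of $\mathcal{V}_1\mathcal{V}_2$ being a subspace is an equivalence invariant (and indeed $X(\overline{\mathcal{V}_1\mathcal{V}_2})Y^{-1}=\overline{X\mathcal{V}_1\mathcal{V}_2Y^{-1}}$). A generic element of the product is $(\alpha I+\beta X_1)(\gamma I+\delta X_2)=\alpha\gamma I+\beta\gamma X_1+\alpha\delta X_2+\beta\delta X_1X_2$, so $\mathcal{V}_1\mathcal{V}_2$ lies in the (at most four-dimensional) linear span $\mathcal{W}={\rm span}\{I,X_1,X_2,X_1X_2\}$. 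The first assertion — that if $\overline{\mathcal{V}_1\mathcal{V}_2}$ is a subspace then $\dim\overline{\mathcal{V}_1\mathcal{V}_2}\le 3$ — I would get by a dimension/genericity count: the image of the bilinear map $(\beta/\alpha,\delta/\alpha)\mapsto$ (coefficients) is a two-parameter family, a quadric surface in $\mathcal{W}$ passing through the four basis directions, and its linear span has dimension at most $3$ generically; more carefully, one shows the Zariski closure of $\{(\alpha\gamma,\beta\gamma,\alpha\delta,\beta\delta)\}$ — the affine cone over the Segre quadric $\{w_0w_3=w_1w_2\}$ — spans a $3$-dimensional subspace only after projection, and it is never all of a $4$-dimensional space unless a linear relation already collapses it. I would phrase this as: $\overline{\mathcal{V}_1\mathcal{V}_2}$ is a subspace iff the Segre cone's span equals the Segre cone, which forces that span to be $\le 3$-dimensional, because the $4$-dimensional cone $\{w_0w_3=w_1w_2\}$ is a proper (non-linear) subset of $\setC^4$.

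The substantive equivalence is the second one. For the ``if'' direction, suppose \eqref{frat} holds: $X_1(cX_2-dI)=aX_2-bI$. I would use this to rewrite the ``nonlinear'' term $X_1X_2$ in terms of the linear ones. If $c\ne 0$, then $X_1X_2=\tfrac{1}{c}(aX_2-bI)+\tfrac{d}{c}X_1$, so $\mathcal{W}={\rm span}\{I,X_1,X_2\}$ already has dimension $\le 3$, and I would then check that the closed Segre surface actually fills out this whole $3$-space — using nonsingularity of $\mathcal{V}_1,\mathcal{V}_2$ to guarantee enough independent directions are hit (e.g. $I$, $X_1$, $X_2$, and one genuinely bilinear point) and homogeneity plus closure to pass from the surface to its linear span. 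The case $c=0$ forces $dI=bI-aX_2$ with $X_2\notin\setC I$ (nonsingular, dimension $2$), hence $a=0$ and $d=-b/1\cdot$... — actually $c=0$ gives $-dX_1=aX_2-bI$, so either $a=0,d\ne 0$ giving $X_1\in\setC I$ (contradiction) or one reduces to a relation among $I,X_1,X_2$, again collapsing $\mathcal{W}$; I would dispatch these degenerate subcases directly.

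For the ``only if'' direction, assume $\overline{\mathcal{V}_1\mathcal{V}_2}$ is a subspace, necessarily contained in $\mathcal{W}={\rm span}\{I,X_1,X_2,X_1X_2\}$ and, by the first part, of dimension $\le 3$. If $\{I,X_1,X_2,X_1X_2\}$ were linearly independent, then $\mathcal{W}$ is $4$-dimensional, and the Segre cone inside it is a proper algebraic subset, so its span (which is a subspace by hypothesis) would have to be a $3$-dimensional subspace $\mathcal{U}\subsetneq\mathcal{W}$ containing all of $I,X_1,X_2,X_1X_2$ — impossible since those four are independent. Hence $I,X_1,X_2,X_1X_2$ are linearly dependent: $\lambda_0 I+\lambda_1 X_1+\lambda_2 X_2+\lambda_3 X_1X_2=0$ with $(\lambda_0,\lambda_1,\lambda_2,\lambda_3)\ne 0$. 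Rearranging, $X_1(\lambda_3 X_2+\lambda_1 I)=-(\lambda_2 X_2+\lambda_0 I)$, which is exactly \eqref{frat} with $c=\lambda_3$, $d=-\lambda_1$, $a=-\lambda_2$, $b=\lambda_0$, not all zero. I would add a short check that these constants are ``not all zero'' is inherited correctly (if all four vanish the dependence is trivial) and that one cannot be in the degenerate situation $X_1X_2$ absent from $\mathcal W$ trivially, using $X_1,X_2\notin\setC I$.

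The main obstacle I anticipate is the ``only if'' step at the point where I claim that a $4$-dimensional $\mathcal{W}$ is incompatible with $\overline{\mathcal{V}_1\mathcal{V}_2}$ being a subspace: one must argue that the closure of the Segre image is genuinely the affine cone over a quadric surface (not something smaller or with lower-dimensional span) precisely when $I,X_1,X_2,X_1X_2$ are independent, and that such a cone is never a linear subspace. This requires knowing that the map $(\alpha,\beta,\gamma,\delta)\mapsto(\alpha\gamma,\beta\gamma,\alpha\delta,\beta\delta)$ has dense image in $\{w_0w_3=w_1w_2\}$ — elementary, but it is the crux — together with the observation that the nonsingularity hypotheses prevent a further linear degeneration (for instance, $X_1$ or $X_2$ being a scalar, which would make the bilinear form drop rank). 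The rest is bookkeeping with the bilinear expansion and the homogeneity/closure already established in the paper.
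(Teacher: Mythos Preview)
Your overall architecture matches the paper's: reduce to $\mathcal{V}_1=\mathrm{span}\{I,X_1\}$, $\mathcal{V}_2=\mathrm{span}\{I,X_2\}$, observe that the four products $I,X_1,X_2,X_1X_2$ lie in the closure, and use a linear dependence among them to pass between the subspace condition and relation \eqref{frat}. Your argument for the dimension bound is in fact cleaner than the paper's: the paper exhibits an explicit point with no nearby solutions of $z_1z_3=b_1,\dots,z_2z_4=b_4$, whereas you simply note that the image lies in the closed Segre cone $\{w_0w_3=w_1w_2\}\subsetneq\setC^4$, so a $4$-dimensional linear closure is impossible. Your ``only if'' direction is correct and essentially identical to the paper's.

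The genuine gap is in your ``if'' direction, and it is not where you think. You assert that once $X_1X_2\in\mathrm{span}\{I,X_1,X_2\}$ you can ``use homogeneity plus closure to pass from the surface to its linear span.'' That inference is false in general: a homogeneous closed set need not be linear (the Segre cone itself is the counterexample). What must actually be shown is that the bilinear image is \emph{dense} in the $3$-dimensional span, and this requires work. The paper does it by writing, say, $V_1V_3=\alpha_1V_1V_4+\alpha_2V_2V_3+\alpha_3V_2V_4$, substituting into the bilinear expansion, and reducing the problem of hitting a generic target $(c_1,c_2,c_3)$ to the solvability of a single quadratic $c_2w^2+(c_2\alpha_1-c_1\alpha_2-c_3)w+c_2\alpha_3-c_3\alpha_1=0$ in an auxiliary ratio $w=z_4/z_3$, which has a suitable root after an arbitrarily small perturbation of the target. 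Some computation of this kind is unavoidable; your sketch contains no mechanism for it. Incidentally, you flag the ``only if'' step as the main obstacle, but that step is fine --- the four products visibly lie in the closure, so if they were independent the closure would be $4$-dimensional, contradicting containment in the Segre cone. Redirect your effort to the density argument in the converse.
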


\begin{proof} Suppose 
${\rm span}\{V_1,V_2\}=\mathcal{V}_1$ and 
${\rm span}\{V_3,V_4\}=\mathcal{V}_2$.
For the claim we may consider the map 
\begin{equation}\label{just}
(z_1,z_2,z_3,z_4)\mapsto 
z_1z_3V_1V_3+z_1z_4V_1V_4+z_2z_3V_2V_3+z_2z_4V_2V_4
\end{equation}
from $\setC^4$ to $\setC^{n \times n}$. If the closure of
the image were four dimensional, then 
the set of equations \begin{equation}\label{jep} 
z_1z_3=b_1,\, z_1z_4=b_2,\,
z_2z_3=b_3\, \mbox{ and }\, z_2z_4=b_4
\end{equation} 
should have a solution for   
a dense subset of vectors $(b_1,b_2,b_3,b_4)$ of $\setC^4$.
Choose $b_1=t+\epsilon_1$, $b_2=t+\epsilon_2$,
$b_3=-t+\epsilon_3$ and $b_4=t+\epsilon_4$ with $t\in \setC\backslash \{0\}$
and small $|\epsilon_j| \ll |t|$, for $j=1,\ldots, 4$.
(For $\epsilon_1=\epsilon_2=\epsilon_3=\epsilon_4=0$
we clearly have no solutions.) We obtain
$$z_3=(1+\hat{\epsilon})z_4\, \mbox{ and }\, z_3=(-1+\tilde{\epsilon})z_4,
$$
where $\hat{\epsilon}=\frac{\epsilon_1-\epsilon_2}{t+\epsilon_2}$  
and $\tilde{\epsilon}=\frac{\epsilon_3-\epsilon_4}{t+\epsilon_4}$.
Subtracting leads to $0=(2+\tilde{\epsilon}- \hat{\epsilon} )z_4$
which is a contradiction for small small $\epsilon_j$. Hence there are
no solutions  near the point $(1,1,-1,1)$.


Assume the closure of $\mathcal{V}_1 \mathcal{V}_2$ is a 
matrix subspace.
Then the matrices 
$V_1V_3$, $V_1V_4$, $V_2V_3$ and $V_2V_4$ are contained in this subspace. 
(For example, the choices
$z_1=z_3=1$ and   
$z_2=z_4=0$ yield $V_1V_2$, etc.)
If the dimension is 3, it follows that 
$V_1V_3$, $V_1V_4$, $V_2V_3$ and $V_2V_4$ are linearly dependent,
i.e., $X_1(X_2-dI)=aX_2-bI$.
If the dimension is 2, then $X_1=aX_2-bI$.

For the converse, assume 
$\dim \{V_1V_3,V_1V_4,V_2V_3,V_2V_4\}=3.$ (The case of dimension
2 is trivial.) We may assume, let us say, 
$V_1V_3=\alpha_1V_1V_4+\alpha_2V_2V_3+\alpha_3 V_2V_4$
with fixed $\alpha_j \in \setC$, for $j=1,2,3$. Inserting this into \eqref{just} gives
$$(\alpha_1 z_1z_3+z_1z_4)V_1V_4+
(\alpha_2z_1z_3+z_2z_3)V_2V_3
+
(\alpha_3z_1z_3+z_2z_4)V_2V_4.$$
Let a vector $b=(b_1,b_2,b_3)$ be given. 
After possibly an arbitrarily small perturbation, we may assume
$b_2\not=0$. Then necessarily 
 $z_3\not=0$, so that with $w=\frac{z_4}{z_3}$ the problem of 
attaining these coefficients is equivalent to finding a solution to 
the equations
\begin{equation}\label{jupu}
z_1(\alpha_1+w)=c_1,\; \alpha_2z_1+z_2=c_2,\;
\alpha_3z_1+wz_2=c_3
\end{equation}
with $c_2\not=0$.
The last two equations, after eliminating $z_2$, can be combined 
into a single one 
$z_1(\alpha_3-\alpha_2w)+c_2w=c_3$. This combined with the
first equation in \eqref{jupu} results 
in $$c_2w^2+(c_2\alpha_1-c_1\alpha_2-c_3)w+c_2\alpha_3-c_3\alpha_1=0$$
which should have a nonzero solution satisfying $w\not=-\alpha_1$.
This can be achieved, after possibly an arbitrarily small perturbations
of the coefficients $c_1$, $c_2$ and $c_3$.
 \end{proof}

%

In particular, the Craig-Sakamoto theorem corresponds to the special case in which
the constants satisfy $a=b=d=0$ and $c=1$.

In the identity \eqref{frat} we are dealing with what we call 
a generalized linear fractional transformation. 
Namely, if $d/c$ is not an eigenvalue of $X_2$, then 
$X_1$ is a classical linear fractional transformation of $X_2$.
Combined with the equivalence transformations \eqref{tyoe}, this yields 
a way to construct two dimensional matrix subspaces such that the closure
of the set of products is a matrix subspace.

\subsection{The set of products and bihomogeneous 
polynomial maps of bidegree $(1,1)$} There
is a family of polynomial functions, 
studied especially  in algebraic geometry, 
which is intimately connected with
the product of two matrix subspaces. 
To describe this connection, 
for the set of products 
let us introduce its linearization defined as follows.

\begin{definition} Assume
$\mathcal{V}_1$ and $\mathcal{V}_2$ are matrix subspaces
of $\setC^{n \times n}$ over $\setC$ (or $\setR$).
The matrix subspace  of the smallest possible
dimension including
$\mathcal{V}_1\mathcal{V}_2$ is
said to be the linearization of  
$\mathcal{V}_1\mathcal{V}_2$.
\end{definition}

Assume $V^1,\ldots,V^{j}$ and $V^{j+1},\ldots,V^k$ 
are
bases of  $\mathcal{V}_1$ and $\mathcal{V}_2$, respectively.
If $W^1,\ldots,W^l$ is a basis of the linearization $\mathcal{W}$, 
then the  inclusion relation of the linearization 
implies that 
$$V^sV^t=\sum_{r=1}^l m^{st}_rW^r$$ for  some constants $m^{st}_r\in \setC.$
Thereby the product of arbitrary elements $V_1 \in\mathcal{V}_1$ and
$V_2 \in\mathcal{V}_2$ can be written as  
\begin{equation}\label{yhteys}
V_1V_2=\sum_{s=1}^jz_sV^s\sum_{t=j+1}^k w_tV^t
=\sum_{r=1}^lz^TM_rw W^r
\end{equation}
with $z=(z_1,\ldots,z_j)\in \setC^j$, $w=(w_{j+1},\ldots,w_k)\in \setC^{k-j}$ and  
matrices 
$M_r=\{m_r^{st}\}\in \setC^{j\times (k-j)}.$
In terms of this expansion, the problem converts into inspecting 
the bihomogeneous polynomial map
\begin{equation}\label{bili}
(z,w)\longmapsto M(z)w=
\smat{z^TM_1\\ \vdots\\ z^TM_l}w
\end{equation}
 of bidegree $(1,1)$
from $\setC^j\times \setC^{k-j}$ to $\setC^l$. 
(For computational algebraic geometric
aspects of such functions, see \cite{FES}.) 
Clearly, $M:\setC^j\rightarrow \setC^{l \times (k-j)}$
is linear with the $p$th column equaling $z^TM_p$ at 
$z\in \setC^j$. 

\smallskip 

\begin{example}
In the proof of Theorem \ref{jse},  when the closure of 
$\mathcal{V}_1\mathcal{V}_2$ is the three dimensional
matrix subspace $\mathcal{W}$,
we have $j=2$, $k=4$ and $l=3$.
Then  $W=V_1V_2 $, $W_2=V_2V_3$ and $W=V_2V_4$
with $$M_1=\smat{\alpha_1&1 \\0&0},\,
M_2=\smat{\alpha_2&0 \\1&0}\,\mbox{ and }\,M_3=\smat{\alpha_3&0 \\0&1}.$$
\end{example} 

\smallskip

The following theorem is of importance.

\begin{theorem}\label{thanks} 
Let $\mathcal{V}_1$, $\mathcal{V}_2$ and $\mathcal{W}$ be matrix subspaces of  
$\setC^{n \times n}$ over $\setC$.
If the closure of 
$\mathcal{V}_1\mathcal{V}_2$ is $\mathcal{W}$,
then $\mathcal{V}_1\mathcal{V}_2$ contains an open dense subset of $\mathcal{W}$. 
\end{theorem}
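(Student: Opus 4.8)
The plan is to exploit the bihomogeneous description \eqref{bili}. Writing $V_1V_2 = \sum_{r=1}^l z^T M_r w\, W^r$, the set of products $\mathcal{V}_1\mathcal{V}_2$ is the image of the polynomial map $(z,w)\mapsto (z^TM_1 w,\ldots,z^TM_l w)$ from $\setC^j \times \setC^{k-j}$ to $\setC^l \cong \mathcal{W}$. The closure of this image is, by hypothesis, all of $\mathcal{W}$, i.e.\ all of $\setC^l$. So the statement reduces to a general fact: the image of a polynomial map $\Phi\colon \setC^N \to \setC^l$ whose closure is $\setC^l$ contains a nonempty Zariski-open (hence dense, in both the Zariski and Euclidean topologies) subset of $\setC^l$.

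First I would invoke the theorem on the image of a morphism of affine varieties: by Chevalley's theorem the image $\Phi(\setC^N)$ is a constructible set, a finite union of locally closed sets. A constructible set whose closure is the whole irreducible variety $\setC^l$ must contain a nonempty Zariski-open subset $U$ of $\setC^l$ — indeed, one of the locally closed pieces in the decomposition must itself have closure equal to $\setC^l$, and a locally closed set with dense closure in an irreducible space contains a nonempty open set. Then $U$ is Zariski-dense, and since $\setC^l$ is irreducible, $U$ is also dense in the Euclidean topology; thus $\mathcal{V}_1\mathcal{V}_2 \supseteq U$ is an open dense subset of $\mathcal{W}$ in the relevant (Zariski, and a fortiori Euclidean) sense. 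Alternatively, one can argue with generic smoothness / the fiber-dimension theorem: pick a point where the Jacobian of $\Phi$ has maximal rank; since the closure of the image is all of $\setC^l$, that maximal rank is $l$, and the constant-rank theorem gives an open neighborhood in the source mapping onto an open set of $\setC^l$.

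For the converse-direction bookkeeping I should double check that the identification of $\mathcal{V}_1\mathcal{V}_2$ with $\Phi(\setC^j\times\setC^{k-j})$ is faithful: the linear isomorphism $\setC^l \to \mathcal{W}$ sending the standard basis to $W^1,\ldots,W^l$ carries $\Phi(z,w)$ exactly to $V_1V_2$, and it is a homeomorphism (and an isomorphism of varieties), so openness and density transfer without loss. The only mild subtlety is the passage from ``Zariski-open dense'' to the topology intended in the statement; since the paper works over $\setC$ and the ambient space is an affine space, a nonempty Zariski-open set is Euclidean-dense, so either reading is fine, and I would state the conclusion as ``an open dense subset'' matching the theorem.

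The main obstacle is essentially citational rather than conceptual: one must be comfortable quoting Chevalley's theorem (constructibility of images of morphisms) and the fact that a constructible set with full-dimensional closure contains an open set — these are standard (e.g.\ Hartshorne, or Mumford's \emph{Red Book}), but the paper's citation list leans toward \cite{HA}, so I would cite that for the relevant statement. A secondary point to be careful about is that $\Phi$ is not proper, so the image itself need not be closed (which is exactly the phenomenon already noted for the LU factorization); the theorem only claims an open \emph{dense} subset, not all of $\mathcal{W}$, and the proof must respect that — the Chevalley/generic-rank argument does, giving density but not closedness.
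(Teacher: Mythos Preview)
Your proof is correct and follows essentially the same route as the paper: identify $\mathcal{V}_1\mathcal{V}_2$ with the image of the polynomial map \eqref{bili}, invoke Chevalley's theorem to get constructibility, and conclude that a constructible subset of $\setC^l$ with full closure contains a nonempty Zariski-open (hence Euclidean-dense) set. The paper cites \cite{Milne} and \cite{HART} rather than \cite{HA} for Chevalley, and your additional remarks on the generic-rank alternative and the Zariski-versus-Euclidean bookkeeping go slightly beyond what the paper spells out, but the core argument is the same.
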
 

\begin{proof}  Using arguments from algebraic geometry, 
this follows directly from a theorem of Chevalley, i.e., 
from the fact that the map \eqref{bili} is regular
and that its image is therefore constructible. 
(For more details, see, e.g., \cite[Theorem 10.2]{Milne}
or \cite[p. 94]{HART}.)
If the closure of the image (in the standard topology of $\setC^l$)
is $\setC^l$, then $\setC^l$ is also the Zariski closure
of the image.
Thus, it contains a Zariski open  set of $\setC^l$. Such a set is open and dense in $\setC^l$. 
\end{proof}


In principle, with \eqref{bili}  the problem of recovering
 whether a given matrix $A\in \mathcal{W}$ is factorizable 
can be approached with the Hilbert Nullstellensatz. As opposed to
considering \eqref{taas}, this problem belongs to the
realm of commutative algebra. For the existence of 
a solution, by invoking the effective Nullstellensatz 
we have 
the linear algebra problem of solving a linear system.
It is, however, only formally so
because of the exponential growth of the size of the linear systems;
see Appendix. 

The converse problem is actually of interest. Namely, consider solving a bilinear system 
\begin{equation}\label{biliyh}
M(z)w=b
\end{equation}
with a given $b\in \setC^l$.  If an expansion
\eqref{yhteys} can be established, for some matrix subspaces $\mathcal{V}_1$, 
$\mathcal{V}_2$ and $\mathcal{W}$, with either  $\mathcal{V}_1$ or
$\mathcal{V}_2$ invertible
then solving \eqref{biliyh} is straightforward
with the algorithms proposed in \cite{HR}.
Consequently, the problem of finding such matrix subspaces for a 
given bilinear system
\eqref{biliyh} seems to be of central relevance.

\section{Riemannian geometry of the product of matrix subspaces}\label{GPMS}
Next the geometry of the set of products is inspected. The flat case is 
introduced first. Then the general case is studied in terms of smooth maps.

\subsection{Factorizable matrix subspaces}
The structure appearing in connection with the LU decomposition and the Craig-Sakamoto theorem  
is really just an instance of the following general notion of noncommutative
factoring.

\begin{definition}\label{lao} 
A matrix subspace $\mathcal{W}$ of 
$\setC^{n \times n}$ over $\setC$ (or $\setR$)
is said to be factorizable  
if 
\begin{equation}\label{ehtotu}
\mathcal{W}=\overline{\mathcal{V}_1\mathcal{V}_2}
\end{equation}
for matrix subspaces $\mathcal{V}_1$ and $\mathcal{V}_2$
of $\setC^{n \times n}$ over $\setC$ (or $\setR$)
satisfying the conditions
\begin{equation}\label{kondi} 
1<\min \{\dim \mathcal{V}_1,\dim \mathcal{V}_2\} \; \mbox{ and }\, 
\max \{\dim \mathcal{V}_1,\dim \mathcal{V}_2\}<\dim \mathcal{W}.
\end{equation}   
\end{definition}

A matrix subspace which is not factorizable is said to be irreducible.
As Example \ref{band} illustrates, 
taking the closure may be necessary. Observe though that, 
for matrix subspaces over $\setC$, the set of products 
$\mathcal{V}_1\mathcal{V}_2$ is topologically and measure 
theoretically large in $\mathcal{W}$
by Theorem \ref{thanks}.

The structure is preserved under equivalence, i.e., $\mathcal{W}$
is factorizable if and only if $X\mathcal{W}Y^{-1}$
is factorizable for any invertible $X,Y\in \setC^{n \times n}$.

A central problem,
typically tough for one reason or another, 
is to recover whether a given matrix subspace is
factorizable. 
Needless to say, if a matrix subspace $\mathcal{W}$ can be factored,
the interest turns completely 
on the factors $\mathcal{V}_1$ and $\mathcal{V}_2$.\footnote{This
is manifested by the LU factorization. After computing an LU
factorization, the original matrix is typically thrown away and only
the factors are saved in the storage.}

\smallskip

\begin{example} This is Example \ref{tope} continued.
In finite dimensions,
an analogous problem corresponds to asking if the set of Toeplitz
matrices is a factorizable matrix subspace. 
(It is likely of use to recall that Toeplitz matrices are equivalent to 
Hankel matrices.)
\end{example}

\smallskip

It is noteworthy that when the set of products is closed,
the singular elements of $\mathcal{W}$
are exactly determined by the singular elements of 
the factors $\mathcal{V}_1$ and $\mathcal{V}_2.$ 
(Otherwise only an inclusion can be guaranteed.) Hence, then the study of
the spectrum of $\mathcal{W}$, i.e., its singular elements, reduces to the study of the spectra of $\mathcal{V}_1$
and $\mathcal{V}_1$.
This is precisely the case in the
Craig-Sakamoto theorem.

\smallskip

\begin{example}\label{symme}
Interpreting the spectrum algebraic geometrically, in the case of $\mathcal{W}=\mathcal{V}_1\mathcal{V}_2$ 
the determinantal variety of $\mathcal{W}$ is determined
by the determinantal varieties of $\mathcal{V}_1$ and $\mathcal{V}_2$.
This ``factorization'' is 
intriguing since the 
determinantal variety of $\mathcal{W}$ may well be irreducible.
For example, we have $\setC^{n \times n}=\mathcal{V}_1\mathcal{V}_2$,
where $\mathcal{V}_1=\mathcal{V}_2$ is the set of symmetric matrices.
(This is a classical result; see, e.g., \cite{PHA} and \cite{LA}
 and references therein.)
\end{example}

\smallskip

In general, \eqref{ehtotu} is a noncommutative 
notion with respect to the factors.
(Curiously, when  $\dim \mathcal{V}_1=\dim \mathcal{V}_2
=1$, the commutative case 
corresponds to the $\omega$-commutativity 
of matrices \cite{HMS}.)
Although more stringent, in group theory 
there is the so-called internal Zappa-Sz\'ep product 
of a group having some 
aspects in common with Definition \ref{lao}.\footnote{Let $G$ be a 
group with the identity element $e$, and let $H$ and $K$ 
be subgroups of $G$. If
$G = HK$ and $H\cap K = \{e\}$, then $G$ is the internal Zappa-Sz\'ep product
of $H$ and $K$.}
For the the internal Zappa-Sz\'ep product, see the paper \cite{BRIN}, 
which is partially expository, 
and references therein.

Only the case where the conditions \eqref{kondi}
are satisfied  
is of interest since
otherwise we are dealing with the equivalence of matrix subspaces. 
Consequently,  a two dimensional
matrix subspace   is never factorizable. 
Then, through the equivalence \eqref{ekvi}, the nonsingular case is 
completely understood
in terms of canonical forms for matrices. Two dimensional
matrix subspaces are hence fundamentally different and correspond,
in essence, to classical matrix analysis. 
Observe that the first nontrivial case, i.e.,
the three dimensional case over $\setC$ 
can be regarded as understood by Theorem \ref{jse}.

In Example \ref{ranki} there is the growth of $k$ and
in Example \ref{band} the growth of the bandwidth,
i.e., we have natural hierarchies of
sets of products. The maximum values of the indices
correspond to the set of products $\setC^{n \times n}$.
Analogously, suppose \eqref{ehtotu} holds.
Take two sequences of nested subspaces
$$\mathcal{V}_1^1\subset\mathcal{V}_1^2\subset
\cdots\subset \mathcal{V}_1^{\dim \mathcal{V}_1}= \mathcal{V}_1\, 
\mbox{ and }\, 
\mathcal{V}_2^1\subset\mathcal{V}_2^2\subset
\cdots \subset\mathcal{V}_2^{\dim \mathcal{V}_2}= \mathcal{V}_2.$$ 
Then $\{\mathcal{V}_1^j\mathcal{V}_2^k\}_{j,k}$ yields a natural hierarchy of
sets of products such that for the maximum values of the indices
the closure of the set of products is $\mathcal{W}.$


After these general remarks, 
let us give further illustrations on how Definition \ref{ehtotu}
actually encompasses numerous
classical instances.

\smallskip

\begin{example}\label{tarkea} 
Although we are concerned with finite dimensional
(noncommutative matrix) subspaces, 
infinite dimensional (commutative) problems are 
certainly of equal interest.
For the classical problem of integer factoring, let $k\in \setZ$ be given.
Associate with $k$ the set $k \setZ$, regarded as subspace of $\setZ$ 
over $\setZ$. 
Then
the question of whether $k$ is
a irreducible is equivalent to asking whether $k\setZ$ is
a factorizable subspace.
\end{example}

\smallskip

%
%
%

For another commutative case, 
consider the ring of complex polynomials. To this 
corresponds a very  classical notion.
Namely, denote by $\mathcal{V}_k$ the subspace consisting of
complex polynomials of degree $k$ at most.
(Of course, factoring depends
heavily on the choice of field.)  
Then the fundamental
theorem of algebra can be stated as a factorization result 
$$\mathcal{V}_k=\mathcal{V}_1\mathcal{V}_{k-1}$$
for $\mathcal{V}_k$. 
(This can also be formulated as an operator factorization result
in infinite dimensions
for lower triangular Toeplitz operators with finite
bandwidth.)
 Of course, this is a tremendously powerful fact
with practical implications, i.e., one would certainly 
like to have any given polynomial factored.

\smallskip

\begin{example}\label{tarkeahko} Polynomial factoring relates to matrix subspaces
as follows.
Suppose $A\in \setC^{n \times n}$ and consider
$$\mathcal{V}=\mathcal{K}(A;I)={\rm span}_{\setC}\,\{I,A,A^2,\ldots\}.$$
Then, by the fundamental
theorem of algebra, $\mathcal{V}$
is a factorizable matrix subspace whenever ${\deg}(A)>2$.
\end{example}


\subsection{Geometry of the product of matrix subspaces}
In general, the set of products of two matrix subspaces  $\mathcal{V}_1$
and $\mathcal{V}_2$ of 
$\setC^{n \times n}$ over $\setC$ (or $\setR$)
cannot be expected to be flat, i.e., to yield a factorizable matrix subspace.
The study of its geometry 
involves  linear  structures, though.
To this end, associate 
with the set of products
the bilinear map
\begin{equation}\label{tiis}
\Psi(V_1,V_2)=V_1V_2
\end{equation}
from the direct sum $\mathcal{V}_1\times \mathcal{V}_2$ 
to $\setC^{n\times n}$. Certainly, $\Psi$ is smooth as it 
 can be treated as a bihomogeneous polynomial map 
once a basis
of the vector space
$\mathcal{V}_1\times \mathcal{V}_2$ has been fixed
appropriately; see \eqref{bili}.

\smallskip

\begin{example} Although not done in this paper,  
let us emphasize that it is also natural to study
$\Psi$ on subspaces of $\mathcal{V}_1\times \mathcal{V}_2$.
For example, take $\mathcal{V}_1=\mathcal{V}_2^T$  to be the
set of lower triangular matrices in $\setC^{n \times n}$. Then, 
related with the LU factoring of symmetric matrices,
consider $\Psi$ on the
subspace $\{(V_1,V_2)\in \mathcal{V}_1\times \mathcal{V}_2
\,:\,V_1=V_2^T\}$. 
\end{example}

\smallskip


If the linearization of $\mathcal{V}_1\mathcal{V}_2$ is not the whole 
$\setC^{n \times n}$, then the set of products 
is said to be degenerate.
In view of this, for matrix subspaces 
of $\setC^{n \times n}$ over $\setC$, a generalization of Picard's theorem
\cite{GR} can be used to 
bound the dimension of the linearization.
For this, assume $A\in \setC^{n\times n}$ and look at the map $A-\Psi$ defined as 
\begin{equation}\label{taas}
(V_1,V_2)\longmapsto 
A-V_1V_2
\end{equation}  
If $A\not\in \mathcal{V}_1\mathcal{V}_2$, then
this does not have zeros and hence we may
 apply the following theorem with 
$m=\dim \mathcal{V}_1+
\dim \mathcal{V}_2$ and $l=n^2-1$.

\begin{theorem} \cite{GR} Let $f:\setC^m\rightarrow \setP_l(\setC)$ be
 a holomorphic map that omits $l+k$ hyperplanes\footnote{A hyperplane 
in homogeneous coordinates $z_0,\ldots, z_l$ is
$\{(z_0,\ldots,z_l)\,:\, \sum_{j=0}^la_jz_j=0\}$
for fixed $a_j\in \setC$.} 
in general position, $k \geq 1$.
Then the image of $f$ is contained in a projective linear subspace
of dimension $\leq [l/k]$, where the brackets mean greatest integer.
\end{theorem}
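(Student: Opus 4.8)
The plan is to reduce the theorem to the Borel lemma on zero-free holomorphic functions, and then to extract the numerical bound $[l/k]$ from a combinatorial analysis of how the omitted hyperplanes restrict to the linear span of the image; the case $k=1$ is vacuous, since then $[l/1]=l$ and the conclusion holds trivially.

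First I would pass to a reduced representation $f=[f_0:\cdots:f_l]$, with $f_0,\dots,f_l$ entire on $\setC^m$ and without a common zero, and let $L_1,\dots,L_{l+k}$ be linear forms cutting out the omitted hyperplanes. Since $f$ omits the $j$-th hyperplane, the entire function $g_j:=L_j(f_0,\dots,f_l)$ is zero-free, so (as $\setC^m$ is simply connected) $g_j=e^{\phi_j}$ for some entire $\phi_j$. The general-position hypothesis says that any $l+1$ of the $L_j$ are linearly independent in the dual of $\setC^{l+1}$; hence every nontrivial linear relation $\sum_j c_jL_j=0$ has at least $l+2$ nonzero coefficients, and (for $k\ge 2$) at least one such relation exists. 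Applying it to $(f_0,\dots,f_l)$ produces a relation $\sum_j c_jg_j=0$ among zero-free entire functions whose support has size $\ge l+2$.

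The analytic input is Borel's lemma: if $h_1,\dots,h_p$ are zero-free entire functions with $h_1+\cdots+h_p=0$, then $\{1,\dots,p\}$ decomposes into blocks of size $\ge 2$ inside each of which the $h_i$ are pairwise proportional. Applied to the relation just obtained, any block yields indices $i\ne j$ with $g_i/g_j$ constant, i.e.\ $L_i(f)/L_j(f)$ constant; as distinct hyperplanes are cut out by non-proportional forms, this forces the image of $f$ into the genuine hyperplane $\{L_i=cL_j\}$, so the image is linearly degenerate. I would then pass to the linear span $V\subseteq\setC^{l+1}$ of the homogeneous coordinates of the image, so that $f$ is non-degenerate in $\mathbf{P}(V)$, of some projective dimension $l'$. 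Writing $M_1,\dots,M_d$ for the \emph{distinct} (up to scalar) restrictions of the $L_j$ to $V$, one checks --- again via Borel's lemma together with non-degeneracy, exactly as above --- that no nontrivial linear relation among the $M_a$ can hold on $V$; hence the $M_a$ are linearly independent, and since they span $V^{*}$ we get $d=l'+1$. In other words, the $l+k$ forms $L_j$ collapse, upon restriction to $V$, into precisely $l'+1$ proportionality classes.

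The remaining step, which I expect to be the main obstacle, is the bookkeeping that turns ``$l+k$ forms in general position collapsing to exactly $l'+1$ classes on an $(l'+1)$-dimensional $V$'' into the sharp inequality $l'\le[l/k]$. This is where the general-position hypothesis must be used quantitatively: for each $(l+1)$-element set $T$ of indices, independence of $\{L_j\}_{j\in T}$ constrains, class by class, the components of the $L_j$ transverse to $V$, and intersecting these constraints over all admissible $T$ bounds how many of the $l+k$ forms may share a common restricted direction. A careful count of this type --- essentially an estimate of the maximal class sizes compatible with every $l+1$ of the forms being independent --- yields the value $[l/k]$; this counting is the substance of \cite{GR}, and I would follow that argument for the details.
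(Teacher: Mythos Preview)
The paper does not supply a proof of this theorem at all: it is quoted verbatim from Green \cite{GR} and used as a black box to bound the dimension of the linearization of $\mathcal{V}_1\mathcal{V}_2$. There is therefore no ``paper's own proof'' to compare your attempt against.

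That said, your sketch is the correct strategy and is essentially Green's original argument: lift to a reduced representation, turn the omitted hyperplanes into zero-free entire functions $g_j=L_j(f_0,\dots,f_l)$, exploit a linear relation among the $L_j$ together with Borel's lemma to force degeneracy, and then pass to the linear span $V$ of the image where the $l+k$ forms collapse into exactly $l'+1$ proportionality classes. One small correction to your heuristic for the final combinatorial step: the bound $l'\le[l/k]$ comes not from estimating \emph{maximal} class sizes but from a \emph{lower} bound. General position implies that the forms belonging to any $l'$ of the classes span a subspace of dimension at most $l'+(l-l')=l$ in the dual, hence carry at most $l$ of the forms; consequently each single class must contain at least $(l+k)-l=k$ forms, and with $l'+1$ classes this gives $(l'+1)k\le l+k$, i.e.\ $l'\le l/k$. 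With that adjustment your outline is complete and matches \cite{GR}.
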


To inspect the structure of the set of products
more locally,
on any matrix subspace $\mathcal{V}$ of $\setC^{n \times n}$
over $\setC$
the standard inner product
\begin{equation}\label{inne}
(V_1,V_2)={\rm tr}(V_2^*V_1)\, \mbox{ with }\, V_1,V_2 \in \mathcal{V}
\end{equation}
is used. (Take the real part for matrix subspaces over $\setR$.)
The respective norm $||\cdot||_F$ is  the Frobenius norm.
To proceed Riemannian geometrically, take 
a smooth curve with the Taylor expansion 
$$c(t)=(V_1,V_2)+t(W_1,W_2)+
t^2(U_1,U_2)+\cdots$$
in $\mathcal{V}_1\times \mathcal{V}_2$ passing through 
a point $(V_1,V_2)$.
Then $\Psi$ maps this curve to $\setC^{n \times n}$ as
\begin{equation}\label{kurvi}
V_1V_2+t(V_1W_2+W_1V_2)+t^2(V_1U_2+W_1W_2+U_1V_2)+\cdots.
\end{equation}
Considering its linearization, the linear terms
span the matrix subspace 
\begin{equation}\label{tangent} 
V_1\mathcal{V}_2+\mathcal{V}_1V_2.
\end{equation}
In particular, the dimension of this subspace yields 
the rank of $\Psi$ at $(V_1,V_2)$. 
%

%
%
%

It is clear that 
the maximum rank yields a lower bound on the dimension
of the linearization of $\mathcal{V}_1\mathcal{V}_2$. 
(Proof: \eqref{tangent} is a subset of the linearization
of $\mathcal{V}_1\mathcal{V}_2$.)

\begin{proposition}\label{apupa}
Let $\mathcal{V}_1$ and $\mathcal{V}_2$ be two subspaces of 
$\setC^{n \times n}$ over $\setC$ (or $\setR$).
Then $\overline{\mathcal{V}_1\mathcal{V}_2}$
equals its linearization if and only if $\overline{\mathcal{V}_1\mathcal{V}_2}$
contains the matrix subspace \eqref{tangent} for any $(V_1,V_2)$.
\end{proposition}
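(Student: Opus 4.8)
The plan is to prove the two implications separately, exploiting the fact (noted just before the statement) that the subspace \eqref{tangent} always sits inside the linearization of $\mathcal{V}_1\mathcal{V}_2$, and hence inside $\overline{\mathcal{V}_1\mathcal{V}_2}$ whenever the latter coincides with the linearization. The forward direction is immediate: if $\overline{\mathcal{V}_1\mathcal{V}_2}$ equals its linearization $\mathcal{W}$, then for each $(V_1,V_2)\in\mathcal{V}_1\times\mathcal{V}_2$ the matrix subspace $V_1\mathcal{V}_2+\mathcal{V}_1V_2$ is contained in the linearization (because every element $V_1W_2+W_1V_2$ is a limit of sums of products $V_1W_2$ and $W_1V_2$, each lying in $\mathcal{V}_1\mathcal{V}_2$, and the linearization is a closed subspace containing $\mathcal{V}_1\mathcal{V}_2$), hence in $\overline{\mathcal{V}_1\mathcal{V}_2}=\mathcal{W}$.

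For the converse, suppose $\overline{\mathcal{V}_1\mathcal{V}_2}$ contains \eqref{tangent} for every $(V_1,V_2)$. Write $\mathcal{W}$ for the linearization; we must show $\mathcal{W}\subseteq\overline{\mathcal{V}_1\mathcal{V}_2}$, the reverse inclusion being automatic. The idea is to use Theorem \ref{thanks}: $\mathcal{V}_1\mathcal{V}_2$ contains an open dense subset $\mathcal{O}$ of $\mathcal{W}$. Since $\mathcal{O}$ is dense in $\mathcal{W}$ and $\overline{\mathcal{V}_1\mathcal{V}_2}$ is closed, it suffices to show $\mathcal{O}\subseteq\overline{\mathcal{V}_1\mathcal{V}_2}$, which is trivially true; so in fact density already gives $\mathcal{W}=\overline{\mathcal{O}}\subseteq\overline{\mathcal{V}_1\mathcal{V}_2}\subseteq\mathcal{W}$. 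In other words, the hypothesis that \eqref{tangent} lies inside $\overline{\mathcal{V}_1\mathcal{V}_2}$ is used to guarantee that the smooth map $\Psi$ of \eqref{tiis} has, at a generic point, image-tangent space of dimension $\dim\mathcal{W}$, which forces the image of $\Psi$ (already known to be constructible and dense in $\mathcal{W}$ by Theorem \ref{thanks}) to be $\dim\mathcal{W}$-dimensional, hence Zariski dense, hence topologically dense, in $\mathcal{W}$.

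Let me make the mechanism precise, since this is the subtle point. At a point $(V_1,V_2)$ of maximal rank, the rank of $\Psi$ equals $\dim(V_1\mathcal{V}_2+\mathcal{V}_1V_2)$, and by the remark preceding the proposition this maximal rank is a lower bound for $\dim\mathcal{W}$. Conversely, the hypothesis says $V_1\mathcal{V}_2+\mathcal{V}_1V_2\subseteq\overline{\mathcal{V}_1\mathcal{V}_2}$, and since the image of $\Psi$ is, by Theorem \ref{thanks}, dense in $\mathcal{W}$, its closure $\overline{\mathcal{V}_1\mathcal{V}_2}$ has dimension $\dim\mathcal{W}$; but at the same time $\overline{\mathcal{V}_1\mathcal{V}_2}$ is contained in $\mathcal{W}$, so $\overline{\mathcal{V}_1\mathcal{V}_2}=\mathcal{W}$, i.e.\ $\overline{\mathcal{V}_1\mathcal{V}_2}$ equals its linearization. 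The main obstacle I anticipate is bookkeeping about what ``dimension'' means: one should either phrase everything in terms of the Zariski closure and apply Chevalley/Theorem \ref{thanks} directly, or argue that the closure of a constructible set of a given Zariski dimension inside an affine space $\mathcal{W}$ can only be all of $\mathcal{W}$ if that dimension equals $\dim\mathcal{W}$ — and that the linearization, being the smallest subspace containing the image, cannot be strictly larger than the (affine hull of the) closure. These are standard facts from algebraic geometry, and I would cite the same references (\cite{Milne} or \cite{HART}) already invoked for Theorem \ref{thanks}.
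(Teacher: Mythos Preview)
Your converse argument has a genuine gap: it is circular, and it never actually uses the hypothesis.

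You invoke Theorem~\ref{thanks} to conclude that $\mathcal{V}_1\mathcal{V}_2$ contains a dense open subset of the linearization $\mathcal{W}$. But Theorem~\ref{thanks} has as its \emph{hypothesis} that $\overline{\mathcal{V}_1\mathcal{V}_2}=\mathcal{W}$, which is exactly what you are trying to prove. Likewise, when you write that the hypothesis ``is used to guarantee that $\Psi$ has, at a generic point, image-tangent space of dimension $\dim\mathcal{W}$,'' notice that the hypothesis only says $V_1\mathcal{V}_2+\mathcal{V}_1V_2\subseteq\overline{\mathcal{V}_1\mathcal{V}_2}$; since $\overline{\mathcal{V}_1\mathcal{V}_2}$ is just some closed set sitting inside $\mathcal{W}$, this gives an \emph{upper} bound $\dim(V_1\mathcal{V}_2+\mathcal{V}_1V_2)\le\dim\mathcal{W}$, not the lower bound you need. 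Tracing through your third paragraph, the assumption on the tangent spaces is mentioned but plays no logical role.

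The paper's argument is entirely elementary and shows directly that $\overline{\mathcal{V}_1\mathcal{V}_2}$ is a linear subspace. The point is simply to read what the hypothesis says: for every $V_1,X\in\mathcal{V}_1$ and every $V_2,Y\in\mathcal{V}_2$, the element $V_1Y+XV_2$ lies in $\overline{\mathcal{V}_1\mathcal{V}_2}$. Given any two products $A_1A_2$ and $B_1B_2$, take $(V_1,V_2)=(A_1,B_2)$ and $(X,Y)=(B_1,A_2)$ to get $A_1A_2+B_1B_2\in\overline{\mathcal{V}_1\mathcal{V}_2}$. Thus $\mathcal{V}_1\mathcal{V}_2+\mathcal{V}_1\mathcal{V}_2\subseteq\overline{\mathcal{V}_1\mathcal{V}_2}$; passing to closures and using homogeneity, $\overline{\mathcal{V}_1\mathcal{V}_2}$ is closed under addition and scalar multiplication, hence is a subspace and therefore equals its linearization. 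No Chevalley, no Sard, no dimension counting is needed.
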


\begin{proof} Suppose $\overline{\mathcal{V}_1\mathcal{V}_2}$
is a subspace, i.e., equals its linearization. 
Since $\Psi :\mathcal{V}_1 \times \mathcal{V}_2 \rightarrow
\overline{\mathcal{V}_1\mathcal{V}_2}$,
it follows that $\overline{\mathcal{V}_1\mathcal{V}_2}$
contains the matrix subspaces \eqref{tangent}.

Suppose $\overline{\mathcal{V}_1\mathcal{V}_2}$
contains  the matrix subspaces \eqref{tangent}. 
Then $V_1Y+XV_2\in \overline{\mathcal{V}_1\mathcal{V}_2}$
for any $V_1,X\in \mathcal{V}_1$ and $Y,V_1\in \mathcal{V}_2$,
i.e., it contains the sums. The multiples are contained by the
homogeneity. Thereby $\overline{\mathcal{V}_1\mathcal{V}_2}$  is a subspace.
\end{proof}

The following facts are well known; see, e.g., \cite[Chapter 2]{CON}.

\begin{proposition}
The rank of $\Psi$ 
generically attains the maximum.
\end{proposition}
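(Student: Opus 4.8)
The plan is to show that the set of points $(V_1,V_2)\in\mathcal{V}_1\times\mathcal{V}_2$ at which $\Psi$ has maximal rank is a nonempty Zariski-open set, which is therefore dense; hence the generic rank is the maximal rank. By \eqref{tangent}, the rank of $\Psi$ at $(V_1,V_2)$ equals $\dim(V_1\mathcal{V}_2+\mathcal{V}_1V_2)$, which is the rank of the linear map $(W_1,W_2)\mapsto V_1W_2+W_1V_2$ from $\mathcal{V}_1\times\mathcal{V}_2$ to $\setC^{n\times n}$. After fixing bases of $\mathcal{V}_1$, $\mathcal{V}_2$, and of $\setC^{n\times n}$, this differential is represented by a matrix $D(V_1,V_2)$ whose entries are \emph{linear} (in fact homogeneous of bidegree $(1,0)+(0,1)$, but for rank purposes only polynomiality matters) in the coordinates of $(V_1,V_2)$; recall that $\Psi$ itself is the bihomogeneous polynomial map \eqref{bili}, so $D$ is its Jacobian and its entries are polynomials in the coordinates.

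First I would set $\rho=\max_{(V_1,V_2)}\operatorname{rank} D(V_1,V_2)$, the maximal rank, attained at some point $(V_1^0,V_2^0)$. The condition $\operatorname{rank} D(V_1,V_2)\geq\rho$ is equivalent to the non-vanishing of at least one of the $\rho\times\rho$ minors of $D(V_1,V_2)$; each such minor is a polynomial in the coordinates of $(V_1,V_2)$. Hence the locus
\begin{equation}\label{genrank}
U=\{(V_1,V_2):\operatorname{rank} D(V_1,V_2)\geq\rho\}
\end{equation}
is the complement of the common zero set of finitely many polynomials, i.e.\ a Zariski-open subset of the affine space $\mathcal{V}_1\times\mathcal{V}_2$. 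It is nonempty because it contains $(V_1^0,V_2^0)$. Since $\mathcal{V}_1\times\mathcal{V}_2$ is an irreducible affine variety, every nonempty Zariski-open subset is dense, both in the Zariski and (over $\setC$) in the standard topology; over $\setR$ one argues that a nonzero real polynomial cannot vanish on a nonempty open set. Therefore $U$ is open and dense, and on it the rank of $\Psi$ equals the maximum $\rho$ (it cannot exceed $\rho$ by definition), which is exactly the assertion that the rank attains its maximum generically.

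I do not expect a serious obstacle here, as this is the standard lower-semicontinuity argument for the rank of a map with polynomial Jacobian; the only point requiring a word of care is the passage from "nonempty Zariski-open" to "dense", which over $\setR$ needs the identity theorem for real-analytic (indeed polynomial) functions rather than irreducibility of a complex variety, and the observation that $D(V_1,V_2)$ genuinely has polynomial entries, which follows because $\Psi$ is the bihomogeneous map \eqref{bili}. For cleanliness one can simply invoke \cite[Chapter 2]{CON}, as the preceding sentence of the excerpt already does, and note that the computation of $\operatorname{rank}\Psi$ via \eqref{tangent} reduces everything to the rank of a matrix of polynomials.
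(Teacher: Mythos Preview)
Your argument is correct and follows essentially the same route as the paper: both observe that the Jacobian of $\Psi$ has polynomial entries in the coordinates of $(V_1,V_2)$, so the locus where some fixed maximal-size minor is nonzero is a nonempty Zariski-open set, whose complement (the rank-drop locus) is contained in a proper algebraic subset. Your write-up is more explicit about the passage from Zariski-open to dense and about the real case, but the underlying idea is identical to the paper's proof.
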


\begin{proof}
Consider the derivative of $\Psi$ regarded as  
 a function of $m=\dim \mathcal{V}_1+
\dim \mathcal{V}_2$ variables after fixing 
bases of $\mathcal{V}_1$ and $\mathcal{V}_2$.
Recall that  a
 matrix has rank $r$ if and only if
 there exists at least one non-zero $r$-by-$r$
minor. Computing the determinant of the respective minor 
of the derivative gives
a nonzero polynomial in $m$  variables. 
The points where the maximum of the
rank is not attained belong to its zero set.
\end{proof}

The following also justifies calling \eqref{tangent}
locally the tangent  space of
$\mathcal{V}_1\mathcal{V}_2$ at $\Psi(V_1,V_2)=V_1V_2$.

\begin{proposition}
Let the
rank of $\Psi$ attain the maximum at $(V_1,V_2)$.
Then the image of a neighbourhood of $(V_1,V_2)$ under  $\Psi$ 
is a smooth submanifold of $\setC^{n \times n}$ 
of the dimension equaling the rank.
\end{proposition}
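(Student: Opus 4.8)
The statement is the standard constant-rank / immersion-image theorem applied to the smooth map $\Psi$. Since $\Psi$ is a polynomial (hence $C^\infty$) map from the finite-dimensional vector space $\mathcal{V}_1\times\mathcal{V}_2$ to $\setC^{n\times n}\cong\setR^{2n^2}$, the plan is to exploit the semicontinuity of rank together with the constant-rank theorem. First I would invoke the previous proposition: the set where $\operatorname{rank}D\Psi$ fails to be maximal is the zero set of a nonzero polynomial, hence closed with empty interior; equivalently, the maximal-rank locus $U=\{(V_1,V_2):\operatorname{rank}D\Psi(V_1,V_2)=r\}$ is open and dense, where $r$ denotes the maximal rank. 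So if $(V_1,V_2)$ lies in $U$, there is an open neighbourhood $N\subseteq U$ on which $D\Psi$ has constant rank $r$.

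Next I would apply the constant-rank theorem (e.g. \cite[Chapter 2]{CON}, or the rank theorem in \cite{leetm}): on $N$, after smooth changes of coordinates in the source and target, $\Psi$ takes the normal form $(x_1,\dots,x_m)\mapsto(x_1,\dots,x_r,0,\dots,0)$. From this local model it is immediate that, shrinking $N$ if necessary, $\Psi(N)$ is an embedded $r$-dimensional smooth submanifold of $\setC^{n\times n}$: it is the graph of a smooth map over the first $r$ coordinates, hence a regular submanifold of dimension $r$. Finally I would identify $r$ with the quantity named in the statement: by the computation in \eqref{kurvi}–\eqref{tangent}, the image of $D\Psi(V_1,V_2)$ is exactly the linear span $V_1\mathcal{V}_2+\mathcal{V}_1V_2$, so $\operatorname{rank}D\Psi(V_1,V_2)=\dim\bigl(V_1\mathcal{V}_2+\mathcal{V}_1V_2\bigr)$, and the hypothesis that this equals the maximal rank is precisely the hypothesis of the proposition. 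This also furnishes the promised justification for calling \eqref{tangent} the tangent space at $V_1V_2$, since the tangent space to the submanifold $\Psi(N)$ at $\Psi(V_1,V_2)$ is $\operatorname{im}D\Psi(V_1,V_2)=V_1\mathcal{V}_2+\mathcal{V}_1V_2$.

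**The main obstacle.** The only genuinely delicate point is the passage from "constant rank on a neighbourhood" to "the image is an embedded submanifold," because in full generality the image of a constant-rank map need only be an immersed (not embedded) submanifold. Here, however, the constant-rank normal form exhibits $\Psi$ locally as a submersion onto a coordinate slice followed by an inclusion, so after sufficiently shrinking $N$ the image is a genuine slice chart and the embedded conclusion is automatic; I would make sure to phrase the statement so that "smooth submanifold" is understood with this local caveat ("the image of a sufficiently small neighbourhood"). A secondary bookkeeping point is that everything should be read over $\setR$: $\setC^{n\times n}$ is treated as the real manifold $\setR^{2n^2}$ with the inner product \eqref{inne} (its real part in the real case), $\Psi$ is real-analytic, and the dimension counts in \eqref{tangent} are $\setC$-dimensions in the complex case, so $r$ as a real rank is twice the complex dimension of $V_1\mathcal{V}_2+\mathcal{V}_1V_2$; this factor of $2$ is harmless but worth noting so that "dimension equaling the rank" is unambiguous.
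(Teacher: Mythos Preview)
Your proposal is correct and follows essentially the same route as the paper's proof: both invoke the constant-rank theorem to put $\Psi$ into the normal form $(x_1,\dots,x_m)\mapsto(x_1,\dots,x_k,0,\dots,0)$ and then read off that the local image is a smooth submanifold of dimension equal to the rank. Your version is a bit more explicit about why the rank is locally constant (lower semicontinuity, together with the previous proposition) and about the embedded-versus-immersed and real-versus-complex bookkeeping, whereas the paper phrases the final step as identifying the image with the preimage of $0$ under the map given by the last $n^2-k$ coordinate functions of the target chart; these are equivalent packagings of the same argument.
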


\begin{proof} Let $m=\dim \mathcal{V}_1+\dim \mathcal{V}_2$
and denote by $k$ rank of $\Psi$ at $(V_1,V_2)$.
Because of the constant rank theorem,
we have the normal form 
$$\psi \circ \Psi \circ \phi^{-1} (x_1,\ldots, x_{m})
\mapsto (x_1,\ldots,x_k,0,0,\ldots,0)$$
for appropriate charts $\phi$ and $\psi$.
Denote the components of $\psi$ by $\psi_j$
and consider the map $(\psi_{k+1},\ldots, \psi_{n^2})$
from $\setC^{n \times n}$ to $\setC^{n^2-k}$. 
For this map, look at the inverse
image of $(0,0,\ldots,0)$ to have the claim (a consequence of the 
constant rank theorem).
\end{proof}

Assume the rank of $\Psi$ attains its maximum at $(V_1,V_2)$.
By the above proposition, locally the
image can be regarded  as a smooth submanifold 
of $\setC^{n \times n}$. 
With respect to the standard inner product on $\setC^{n \times n}$, denote by 
$\mathbf{P}_{V_1,V_2}$ the orthogonal
projector on $\setC^{n \times n}$ onto 
the tangent space $V_1\mathcal{V}_2+\mathcal{V}_1V_2$.
Then to Riemannian geometrically
assess how curved $\mathcal{V}_1\mathcal{V}_2$ is 
at $\Psi (V_1,V_2)$,
consider the map
\begin{equation}\label{mittari}
Q_{(V_1,V_2)}(W_1,W_2)=2(I-\mathbf{P}_{V_1,V_2})W_1W_2
\end{equation}
on $\mathcal{V}_1\times \mathcal{V}_2$.
Using this notation, its introduction can be argued as follows.

\begin{proposition} Let the rank of $\Psi$ attain its maximum 
at $(V_1,V_2)$. Then, 
for the image of a neighbourhood of $(V_1,V_2)$ under  $\Psi$,  
the extrinsic curvature of the geodesic 
passing through 
$\Psi(V_1,V_2)$ with the speed vector $V_1W_2+W_1V_2$ equals  
\begin{equation}\label{excurv}
\left|\left| Q_{(V_1,V_2)}(W_1,W_2) \right|\right|_F.
\end{equation}
\end{proposition}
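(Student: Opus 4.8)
The plan is to compute the second fundamental form of the submanifold $\Psi(\text{neighbourhood of }(V_1,V_2))$ at the point $p=\Psi(V_1,V_2)=V_1V_2$ and to recognize \eqref{mittari} as the value of that form on the tangent vector $V_1W_2+W_1V_2$, after which the stated formula is the classical expression for the (extrinsic) curvature of a geodesic in terms of the second fundamental form. Concretely, I would parametrize a curve in the submanifold by pushing forward a curve $c(t)$ in $\mathcal{V}_1\times\mathcal{V}_2$ with $c(0)=(V_1,V_2)$, $c'(0)=(W_1,W_2)$, and use the expansion \eqref{kurvi}: the image curve is
\begin{equation*}
\gamma(t)=V_1V_2+t\,(V_1W_2+W_1V_2)+t^2\,(V_1U_2+W_1W_2+U_1V_2)+\cdots .
\end{equation*}
The velocity at $t=0$ is $\gamma'(0)=V_1W_2+W_1V_2$, which lies in the tangent space $V_1\mathcal{V}_2+\mathcal{V}_1V_2$ as expected, and the acceleration is $\gamma''(0)=2\,(V_1U_2+W_1W_2+U_1V_2)$.

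The first step is to isolate the \emph{normal} component of the acceleration, since the extrinsic curvature vector is $(I-\mathbf{P}_{V_1,V_2})\gamma''(0)$ once $\gamma$ is a geodesic (equivalently, once one has chosen a curve whose acceleration is purely normal, or simply because the tangential part does not affect the normal component). Now $V_1U_2+U_1V_2$ lies in the tangent space $V_1\mathcal{V}_2+\mathcal{V}_1V_2$ for every choice of $(U_1,U_2)$, so $(I-\mathbf{P}_{V_1,V_2})(V_1U_2+U_1V_2)=0$ identically. Hence
\begin{equation*}
(I-\mathbf{P}_{V_1,V_2})\,\gamma''(0)=2\,(I-\mathbf{P}_{V_1,V_2})\,W_1W_2=Q_{(V_1,V_2)}(W_1,W_2),
\end{equation*}
which is exactly \eqref{mittari}. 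The second step is the normalization issue: the extrinsic curvature of a geodesic with speed vector $V_1W_2+W_1V_2$ is $\lVert(I-\mathbf{P})\gamma''(0)\rVert_F/\lVert\gamma'(0)\rVert_F^2$ in general, so to get \eqref{excurv} on the nose one should take the speed vector to be of unit length, $\lVert V_1W_2+W_1V_2\rVert_F=1$; I would state this normalization explicitly (or absorb it, as the proposition's phrasing ``with the speed vector $V_1W_2+W_1V_2$'' suggests the reader is to take that vector as the unit speed direction). The third step is to check that the quantity is well defined, i.e. independent of the representative curve $c(t)$ chosen with the given $1$-jet: any two such curves differ at order $t^2$ by a term of the form $(V_1\tilde U_2+\tilde U_1 V_2)$ up to tangential corrections, and we have just seen such terms are killed by $(I-\mathbf{P})$; the genuinely quadratic contribution $W_1W_2$ depends only on $(W_1,W_2)$.

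The main obstacle, such as it is, is conceptual bookkeeping rather than computation: one must be careful that the constant-rank hypothesis is what makes the image a genuine smooth submanifold near $p$ (so that ``second fundamental form'' and ``geodesic'' are meaningful), and that the second fundamental form $\mathrm{II}(X,X)=(I-\mathbf{P})(\nabla_X X)$ in the ambient flat metric on $\setC^{n\times n}$ really is computed by taking any ambient curve tangent to the submanifold and projecting its Euclidean acceleration — this uses flatness of $\setC^{n\times n}$ so that the ambient connection is just the ordinary second derivative. Once that is in place, the identity $(I-\mathbf{P}_{V_1,V_2})(V_1\mathcal{V}_2+\mathcal{V}_1V_2)=\{0\}$ does all the work, and the formula \eqref{excurv} follows. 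I would also remark that $Q_{(V_1,V_2)}$ is symmetric and bilinear as a map on $\mathcal{V}_1\times\mathcal{V}_2$ only in the relevant combined variable, i.e. it is the quadratic form associated with the second fundamental form, which is the right structural statement to record before moving on.
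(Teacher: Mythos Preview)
Your proposal is correct and follows essentially the same route as the paper's proof: both identify the normal component of the acceleration $\gamma''(0)=2(V_1U_2+W_1W_2+U_1V_2)$ and use that $V_1U_2+U_1V_2$ lies in the tangent space, so that $(I-\mathbf{P}_{V_1,V_2})\gamma''(0)=Q_{(V_1,V_2)}(W_1,W_2)$ regardless of the second-order coefficient $(U_1,U_2)$. The paper phrases this by \emph{choosing} $(U_1,U_2)$ so that the full acceleration is already orthogonal to the tangent space (the geodesic condition at $t=0$), whereas you note directly that the tangential part is killed by the projector; these are equivalent and your discussion of well-definedness and of the unit-speed normalization makes explicit points the paper leaves implicit.
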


\begin{proof}
An application of \eqref{mittari} always corresponds to
a curve \eqref{kurvi} having the acceleration 
orthogonal to  $V_1\mathcal{V}_2+\mathcal{V}_1V_2$  
which is achieved by choosing the coefficient $(U_1,U_2)$
such that $V_1U_2+W_1W_2+U_1V_2$ belongs to the orthogonal complement of 
the matrix subspace $V_1\mathcal{V}_2+\mathcal{V}_1V_2$.
Such a property is required from the geodesics
passing through $\Psi(V_1,V_2)$; see \cite[pp. 138--139]{lee}.
Hence, for a geodesics passing through 
$\Psi(V_1,V_2)$ with the speed vector $V_1W_2+W_1V_2$, the 
coefficient $(U_1,U_2)$ is determined by this condition.
\end{proof}

If the map \eqref{mittari} vanishes identically, then the set of products
$\mathcal{V}_1\mathcal{V}_2$ belongs to $V_1\mathcal{V}_2+\mathcal{V}_1V_2$.
Then also the linearization of $\mathcal{V}_1\mathcal{V}_2$
equals this matrix subspace.

Geodesics and measuring curvature are closely related. Namely, 
the second fundamental form ${\rm II}$ 
for the image of $\Psi$ in a neighbourhood of $(V_1,V_2)$
can  be found by setting 
$\frac{1}{2}(Q_{(V_1,V_2)}(W_1+\tilde{W}_1,W_2+\tilde{W}_2)-
Q_{(V_1,V_2)}(W_1,W_2)-Q_{(V_1,V_2)}(\tilde{W}_1,\tilde{W}_2))$ to be 
$${\rm II}(W_1,W_2,\tilde{W}_1,\tilde{W}_2)=
(I-\mathbf{P}_{V_1,V_2})(W_1\tilde{W}_2+\tilde{W}_1W_2).$$
For more details on the second fundamental form
and its geometric interpretation, see \cite[p. 138]{lee}.

As a first example, Theorem \ref{jse} corresponds to vanishing
curvature. For another, with nonvanishing curvature, 
 consider the subset of matrices related with approximating
with the singular value decomposition.

\smallskip

\begin{example} This is Example \ref{ranki} continued, i.e., 
consider $\mathcal{F}_k \subset \setC^{n \times n}$,  
the set of matrices of rank $k$ at most.
Take $(V_1,V_2)$ to be a generic point of
$\mathcal{V}_1\times \mathcal{V}_2$ and 
let $V_1=U_1\Sigma_1W_1^*$
and $V_2=U_2\Sigma_2W_2^*$ be the singular value decompositions
of $V_1$ and $V_2$.\footnote{For $j=1,2$, the matrices $U_j\in \setC^{n \times n}$ and 
$W_j\in \setC^{n \times n}$ are unitary.  In the matrices $\Sigma_j$ only the first $k$ diagonal entries 
can be nonzero. For  more details on  the singular value decomposition, see \cite{GV}.}
Then at $\Psi(V_1,V_2)$ we have 
$$V_1\mathcal{V}_2+\mathcal{V}_1V_2=U_1\mathcal{W}W_2^*,$$
where $\mathcal{W}$ is the matrix subspace of $\setC^{n \times n}$ consisting of matrices whose
first $k$ rows and columns can be freely chosen. Hence, the points 
$(W_1,W_2)\in \mathcal{V}_1\times \mathcal{V}_2$
satisfying $Q_{(V_1,V_2)}(W_1,W_2)=2W_1W_2$ are readily determined. Consequently, 
$\mathcal{V}_1\mathcal{V}_2=\mathcal{F}_k$
can be regarded as being maximally curved when measured in terms of \eqref{excurv}.
It is of interest to note that 
$\mathcal{V}_2\mathcal{V}_1$ is flat.
\end{example}

\smallskip

Observe that although $Q_{(V_1,V_2)}$ yields a local measure of curvature,
for the image of $\Psi$ around the point $\Psi(V_1,V_2)$, it  
possesses a global character by the
fact that the appearing orthogonal projector  operates on the full image of $\Psi$.
This provides an opportunity to use local information to draw conclusions
about global properties
of the set of product as follows.

\begin{theorem}\label{juku}
Let $\mathcal{V}_1$ and $\mathcal{V}_2$ be matrix subspaces of 
$\setC^{n \times n}$ over $\setC$. 
Then $\overline{\mathcal{V}_1\mathcal{V}_2}$ equals
its linearization if and only if \eqref{mittari} vanishes
for some $(V_1,V_2).$
\end{theorem}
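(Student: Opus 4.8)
The plan is to translate both implications into statements about the tangent space \eqref{tangent}, which I abbreviate $\mathcal{T}_{V_1,V_2}=V_1\mathcal{V}_2+\mathcal{V}_1V_2$, and then to bridge the gap between ``contained in a subspace'' and ``dense in a subspace'' by a dimension count of the kind already used for Theorem \ref{thanks}. The starting observation is cheap: since $\mathbf{P}_{V_1,V_2}$ is the orthogonal projector onto $\mathcal{T}_{V_1,V_2}$, the map $Q_{(V_1,V_2)}$ of \eqref{mittari} vanishes identically exactly when $W_1W_2\in\mathcal{T}_{V_1,V_2}$ for every $(W_1,W_2)\in\mathcal{V}_1\times\mathcal{V}_2$, i.e., exactly when $\mathcal{V}_1\mathcal{V}_2\subseteq\mathcal{T}_{V_1,V_2}$. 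Recall also that $\dim\mathcal{T}_{V_1,V_2}$ is the rank of $\Psi$ at $(V_1,V_2)$.

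Assume first that $Q_{(V_1,V_2)}\equiv0$, so $\mathcal{V}_1\mathcal{V}_2\subseteq\mathcal{T}_{V_1,V_2}$. Since $V_1\mathcal{V}_2$ and $\mathcal{V}_1V_2$ are subspaces lying inside $\mathcal{V}_1\mathcal{V}_2$, the linearization of $\mathcal{V}_1\mathcal{V}_2$ is then forced to be $\mathcal{T}_{V_1,V_2}$; this is the remark following \eqref{mittari}. In particular $\mathcal{T}_{W_1,W_2}\subseteq\mathcal{T}_{V_1,V_2}$ for every $(W_1,W_2)$, so the rank of $\Psi$ at $(V_1,V_2)$ is the maximal rank. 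Because $\mathcal{V}_1\mathcal{V}_2$ is the constructible image of a polynomial map, its ordinary closure $\overline{\mathcal{V}_1\mathcal{V}_2}$ coincides with its Zariski closure (as in the proof of Theorem \ref{thanks}), and the dimension of this closure equals the maximal rank of $\Psi$, namely $\dim\mathcal{T}_{V_1,V_2}$. Thus $\overline{\mathcal{V}_1\mathcal{V}_2}$ is a closed subvariety of $\mathcal{T}_{V_1,V_2}$ (recall $\mathcal{V}_1\mathcal{V}_2\subseteq\mathcal{T}_{V_1,V_2}$) of full dimension; as a linear subspace is irreducible, it must be all of $\mathcal{T}_{V_1,V_2}$, hence equal to the linearization.

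For the converse, suppose $\overline{\mathcal{V}_1\mathcal{V}_2}=\mathcal{W}$ is a matrix subspace equal to the linearization. By the same dimension identity (the dimension of the image of a polynomial map equals the maximal rank of its differential, and Zariski closure equals ordinary closure on the constructible set $\mathcal{V}_1\mathcal{V}_2$), the maximal rank of $\Psi$ equals $\dim\mathcal{W}$, and this maximum is attained at some $(V_1,V_2)$ by the proposition on generic maximality of the rank. Then $\mathcal{T}_{V_1,V_2}$ is a subspace of dimension $\dim\mathcal{W}$ sitting inside $\mathcal{W}$ --- because $V_1\mathcal{V}_2,\mathcal{V}_1V_2\subseteq\mathcal{V}_1\mathcal{V}_2\subseteq\mathcal{W}$ and $\mathcal{W}$ is a subspace --- so $\mathcal{T}_{V_1,V_2}=\mathcal{W}$. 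Hence $\mathbf{P}_{V_1,V_2}$ fixes every product $W_1W_2\in\mathcal{V}_1\mathcal{V}_2\subseteq\mathcal{W}$, and $Q_{(V_1,V_2)}\equiv0$ at this point, which is exactly what \eqref{mittari} vanishing for some $(V_1,V_2)$ asserts.

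The step I expect to be the obstacle is the passage, in the first direction, from $\mathcal{V}_1\mathcal{V}_2\subseteq\mathcal{T}_{V_1,V_2}$ to $\overline{\mathcal{V}_1\mathcal{V}_2}=\mathcal{T}_{V_1,V_2}$. Soft topology only yields that $\mathcal{V}_1\mathcal{V}_2$ contains an open subset of $\mathcal{T}_{V_1,V_2}$ (via the constant-rank normal form at a point of maximal rank), and by homogeneity this alone does not force density. The resolution is to descend to dimensions of closures and invoke the standard facts that the image of a polynomial map has dimension equal to the generic rank of its differential and that a linear subspace is irreducible --- exactly the algebraic-geometric ingredients already used around Theorem \ref{thanks}. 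The one bookkeeping point to keep straight is that in each direction one genuinely knows the maximal rank is attained at the chosen pair $(V_1,V_2)$: in the first direction because $\mathcal{T}_{V_1,V_2}$ is already the full linearization, in the second by taking $(V_1,V_2)$ generic.
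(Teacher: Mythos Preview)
Your proof is correct and complete, though it follows a somewhat different route from the paper's. The paper treats the two implications with distinct tools: for the direction ``closure equals linearization $\Rightarrow$ \eqref{mittari} vanishes somewhere'' it invokes Sard's theorem together with Theorem~\ref{thanks} to produce a regular value of $\Psi$ inside $\mathcal{W}$, whence the tangent space at a preimage point already fills $\mathcal{W}$; for the converse it uses the constant-rank normal form to exhibit a Euclidean-open subset of $\mathcal{T}_{V_1,V_2}$ in the image and then argues, via Chevalley, that this forces Zariski density in $\mathcal{T}_{V_1,V_2}$. You instead run both implications through the single algebraic-geometric identity ``$\dim\overline{\mathrm{Im}\,\Psi}=$ generic rank of $d\Psi$'' (valid in characteristic zero by generic smoothness), combined with the observation that an irreducible variety cannot strictly contain a closed subvariety of the same dimension. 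This makes your argument more uniform and avoids Sard's theorem and the explicit appeal to the constant-rank chart; the paper's version, on the other hand, keeps the differential-geometric flavour that motivates the statement and makes the link to regular values explicit. Both arguments ultimately lean on the same constructibility facts around Theorem~\ref{thanks}, so the difference is one of packaging rather than substance.
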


\begin{proof} Assume $\overline{\mathcal{V}_1\mathcal{V}_2}$
equals its linearization $\mathcal{W}$. 
By Sard's theorem, the set of critical values of  $\Psi$ is 
of the first category with Lebesque measure zero; see, e.g., 
\cite[p. 260, Theorem 1]{POS}. 
However, by Theorem \ref{thanks}, the image contains an
open subset of $\mathcal{W}$. Its Lebesque measure is obviously positive. 
Thereby there must be points
in the image which are regular values. Consequently, 
\eqref{mittari} vanishes
for some $(V_1,V_2).$

For the converse, suppose \eqref{mittari} vanishes
for some $(V_1,V_2).$ 
Clearly, then the closure of $\mathcal{V}_1\mathcal{V}_2$ belongs to the respective matrix subspace
$\mathcal{W}=V_1\mathcal{V}_2+
\mathcal{V}_1V_2$, i.e., the linearization of  $\mathcal{V}_1\mathcal{V}_2$. 
We may regard $\Psi :\mathcal{V}_1\times \mathcal{V}_2 \rightarrow \mathcal{W}.$  
By the constant rank theorem, 
the image of a neighbourhood of $(V_1,V_2)$ under  $\Psi$ 
contains an open set (in the standard
topology) in $\mathcal{W}$. 
In the Zariski topology, the image of $\Psi$ is constructible and
contains an open subset of its closure. 
This follows from a theorem of Chevalley.
(For more details, see, e.g., \cite[Theorem 10.2]{Milne}
or \cite[p. 94]{HART}.)
Because the image of $\Psi$
contains an open subset in the standard topology, the 
open subset of the image of $\Psi$ in the Zariski topology
is dense in $\mathcal{W}$. 
\end{proof}

Obviously, under the assumptions of this theorem,   $\mathcal{W}=\overline{\mathcal{V}_1\mathcal{V}_2}$ is factorizable (assuming \eqref{kondi}
is satisfied).

An immediate example is given by the
LU factorization.

\smallskip

\begin{example}
Using Theorem \ref{juku}, it is straightforward to 
conclude 
that an LU factorization exists
for elements in a dense open subset of $\setC^{ n \times n}$. 
(Of course, this
is well known; a nonsingular matrix can be LU factored if and only if it
is strongly nonsingular \cite[p. 162]{HJ}.)
Namely, let $\mathcal{V}_1$ 
denote the set of lower triangular matrices 
and $\mathcal{V}_2$ 
upper triangular matrices with constant diagonal.
Then at $(I,I)\in \mathcal{V}_1\times\mathcal{V}_2$ 
the rank of $\Psi$ is readily seen to be $n^2.$
Consequently, \eqref{mittari} vanishes, so that
$\setC^{n \times n}=\overline{\mathcal{V}_1\mathcal{V}_2}$.
\end{example}

\smallskip

Of course, we may view this example just as a special case of the following
consequence of Theorem \ref{juku}. 

\begin{corollary}
Suppose $\mathcal{V}_1$ and $\mathcal{V}_2$ are matrix
subspaces of $\setC^{n \times n}$ over $\setC$ both containing the identity.
If $\setC^{n\times n}=\mathcal{V}_1+\mathcal{V}_2$, then 
$\setC^{n \times n}=\overline{\mathcal{V}_1\mathcal{V}_2}$.
\end{corollary}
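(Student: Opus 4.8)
The plan is to apply Theorem \ref{juku} with the single distinguished point $(V_1,V_2)=(I,I)$, which lies in $\mathcal{V}_1\times\mathcal{V}_2$ precisely because both subspaces are assumed to contain the identity. First I would compute the candidate tangent space \eqref{tangent} at this point: since $I\in\mathcal{V}_1\cap\mathcal{V}_2$, it equals $I\,\mathcal{V}_2+\mathcal{V}_1\,I=\mathcal{V}_2+\mathcal{V}_1$, which is all of $\setC^{n\times n}$ by hypothesis. Hence the rank of $\Psi$ at $(I,I)$ is $n^2$. As $n^2$ is an absolute upper bound on $\dim\bigl(V_1\mathcal{V}_2+\mathcal{V}_1V_2\bigr)$ for any $(V_1,V_2)$, this is in fact the maximal rank, so the local curvature machinery built around $(I,I)$ is available.

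Next I would observe that, because the tangent space $V_1\mathcal{V}_2+\mathcal{V}_1V_2$ at $(I,I)$ is the whole ambient space $\setC^{n\times n}$, the orthogonal projector $\mathbf{P}_{I,I}$ onto it is simply the identity operator on $\setC^{n\times n}$. Consequently $Q_{(I,I)}(W_1,W_2)=2(I-\mathbf{P}_{I,I})W_1W_2=0$ for every $(W_1,W_2)\in\mathcal{V}_1\times\mathcal{V}_2$, i.e.\ the map \eqref{mittari} vanishes at $(I,I)$.

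Then Theorem \ref{juku} applies directly and yields that $\overline{\mathcal{V}_1\mathcal{V}_2}$ equals its linearization. It remains only to identify that linearization with the full space: since $\overline{\mathcal{V}_1\mathcal{V}_2}\supseteq\mathcal{V}_1\mathcal{V}_2$ contains $I\cdot V_2$ for every $V_2\in\mathcal{V}_2$ and $V_1\cdot I$ for every $V_1\in\mathcal{V}_1$, its linearization contains $\mathcal{V}_1+\mathcal{V}_2=\setC^{n\times n}$. Hence $\overline{\mathcal{V}_1\mathcal{V}_2}=\setC^{n\times n}$, which is the claim; equivalently one may just quote the converse half of the proof of Theorem \ref{juku}, where the vanishing of \eqref{mittari} at $(V_1,V_2)$ already identifies the linearization with $V_1\mathcal{V}_2+\mathcal{V}_1V_2$, here equal to $\mathcal{V}_2+\mathcal{V}_1$.

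I do not expect any real obstacle here: the corollary is essentially a one-line specialization of Theorem \ref{juku}. The only point deserving a moment's care is the remark that maximal rank of $\Psi$ at $(I,I)$ — needed so that the second-fundamental-form apparatus at that point makes sense — is automatic once the tangent space there is seen to be all of $\setC^{n\times n}$, since no rank can exceed $n^2$.
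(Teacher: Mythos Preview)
Your proof is correct and is exactly the argument the paper intends: the corollary is stated without its own proof, but the immediately preceding LU example spells out the method—evaluate the tangent space \eqref{tangent} at $(I,I)$ to get $\mathcal{V}_1+\mathcal{V}_2=\setC^{n\times n}$, note that \eqref{mittari} therefore vanishes there, and invoke Theorem~\ref{juku}. Your additional remarks on why the rank at $(I,I)$ is maximal and why the linearization is all of $\setC^{n\times n}$ are accurate and make explicit what the paper leaves implicit.
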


Analogously can be formulated claims for lower (upper)
triangular matrices
in case of subspaces $\mathcal{V}_1$ and $\mathcal{V}_2$
of lower (upper) triangular matrices as follows.

\begin{corollary}
Suppose $\mathcal{W}$ is equivalent to 
a subalgebra 
of $\setC^{n \times n}$ over $\setC$ containing 
invertible elements.
Then $\mathcal{W}$ is factorizable.
\end{corollary}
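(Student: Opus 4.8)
The plan is to reduce the claim to the previous corollary by first disposing of the word ``equivalent'' and then handling the subalgebra case directly via Theorem \ref{juku}. Since factorizability is preserved under equivalence (as noted just after Definition \ref{lao}), I may assume $\mathcal{W}$ itself is a subalgebra $\mathcal{A}\subseteq\setC^{n\times n}$ over $\setC$ containing an invertible element $A$. Because $\mathcal{A}$ is closed under multiplication and $A\in\mathcal{A}$, the subspace $A^{-1}\mathcal{A}$ is again a subalgebra, now containing $I$; replacing $\mathcal{A}$ by $A^{-1}\mathcal{A}$ (an equivalence) I may assume $I\in\mathcal{A}$. Thus it suffices to prove: a unital subalgebra $\mathcal{A}$ of $\setC^{n\times n}$ is factorizable, i.e. $\mathcal{A}=\overline{\mathcal{V}_1\mathcal{V}_2}$ with $\mathcal{V}_1,\mathcal{V}_2$ satisfying \eqref{kondi}.

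The natural choice is $\mathcal{V}_1=\mathcal{V}_2=\mathcal{A}$. First, $\mathcal{A}\mathcal{A}\subseteq\mathcal{A}$ since $\mathcal{A}$ is an algebra, and $\mathcal{A}\subseteq\mathcal{A}\mathcal{A}$ because $V=VI$ with $I\in\mathcal{A}$; hence $\mathcal{A}\mathcal{A}=\mathcal{A}$, which is already closed, so \eqref{ehtotu} holds with $\mathcal{W}=\mathcal{A}$. It remains to check the dimension inequalities \eqref{kondi}. Here one must exclude the degenerate cases $\dim\mathcal{A}=1$: but a unital subalgebra of dimension $1$ is just $\setC I$, which is a two-dimensional issue only if we insist on nontrivial factors — in the genuinely interesting range $\dim\mathcal{A}\ge 2$ one has $1<\dim\mathcal{V}_1=\dim\mathcal{V}_2=\dim\mathcal{A}$, but the second condition $\max\{\dim\mathcal{V}_1,\dim\mathcal{V}_2\}<\dim\mathcal{W}$ fails for the choice $\mathcal{V}_1=\mathcal{V}_2=\mathcal{A}$. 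So this naive choice does not meet \eqref{kondi}, and the real content is to produce proper subspaces whose product fills out $\mathcal{A}$.

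The fix is to mimic the LU example inside $\mathcal{A}$. Pick any proper subspace $\mathcal{U}\subsetneq\mathcal{A}$ with $I\in\mathcal{U}$ and $\dim\mathcal{U}=\dim\mathcal{A}-1$; I claim $\overline{\mathcal{U}\mathcal{U}}=\mathcal{A}$. By Theorem \ref{juku} it is enough to exhibit one point $(V_1,V_2)\in\mathcal{U}\times\mathcal{U}$ at which the map \eqref{mittari} vanishes, equivalently at which $\mathrm{rank}\,\Psi$ attains the value $\dim\mathcal{A}$; and since $V_1\mathcal{U}+\mathcal{U}V_2\subseteq\mathcal{A}$ always, it suffices to find $(V_1,V_2)$ with $V_1\mathcal{U}+\mathcal{U}V_2=\mathcal{A}$. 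Take $V_1=V_2=I$: then $V_1\mathcal{U}+\mathcal{U}V_2=\mathcal{U}+\mathcal{U}=\mathcal{U}$, which is not all of $\mathcal{A}$ — so again a smarter choice is needed. The genuine argument: choose $V_2=I$ and $V_1$ a suitable invertible element of $\mathcal{U}$ (which exists generically, and certainly we may arrange $\mathcal{U}$ to contain one, since $I\in\mathcal{U}$ and the invertibles are dense near $I$); then $V_1\mathcal{U}+\mathcal{U}I=V_1\mathcal{U}+\mathcal{U}$, a sum of two hyperplanes of $\mathcal{A}$, which equals $\mathcal{A}$ unless $V_1\mathcal{U}=\mathcal{U}$. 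The latter would force $V_1\mathcal{U}=\mathcal{U}$ for the invertible $V_1$, i.e. $\mathcal{U}$ is a left ideal-like object stable under $V_1$; by choosing $\mathcal{U}$ to avoid this (any complement of a fixed line not left-invariant under some fixed invertible element of $\mathcal{A}$ — such exists whenever $\mathcal{A}\ne\setC I$, since otherwise every element of $\mathcal{A}$ would be scalar), we get $V_1\mathcal{U}+\mathcal{U}=\mathcal{A}$. Hence $\mathrm{rank}\,\Psi=\dim\mathcal{A}$ at $(V_1,I)$, \eqref{mittari} vanishes there, and Theorem \ref{juku} yields $\overline{\mathcal{U}\mathcal{U}}=\mathcal{A}$. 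Now \eqref{kondi} holds: $1<\dim\mathcal{U}=\dim\mathcal{A}-1<\dim\mathcal{A}=\dim\mathcal{W}$, valid precisely when $\dim\mathcal{A}\ge 3$, which is the only range in which factorizability is even defined.

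The main obstacle I anticipate is the last step: arranging the subspace $\mathcal{U}$ and the element $V_1$ so that $V_1\mathcal{U}+\mathcal{U}=\mathcal{A}$, i.e. ruling out the coincidence $V_1\mathcal{U}=\mathcal{U}$. This is a genericity/dimension-counting argument — one must verify that a unital subalgebra $\mathcal{A}$ with $\dim\mathcal{A}\ge 3$ admits an invertible element $V_1$ and a codimension-one unital subspace $\mathcal{U}$ with $V_1\mathcal{U}\ne\mathcal{U}$, and the cleanest route is to note that if $V_1\mathcal{U}=\mathcal{U}$ for *every* codimension-one $\mathcal{U}\ni I$ then $V_1$ (indeed every invertible element of $\mathcal{A}$) acts as a scalar, forcing $\mathcal{A}=\setC I$, contrary to $\dim\mathcal{A}\ge 3$. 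Everything else is bookkeeping: the reduction to the unital case, the trivial inclusions $\mathcal{A}\mathcal{A}=\mathcal{A}$, and the invocation of Theorem \ref{juku} with the rank computed at a single convenient point.
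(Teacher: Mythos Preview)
Your overall strategy---reduce to a unital subalgebra and apply Theorem \ref{juku} at a well-chosen point---is the right one, and matches the paper's. But the specific construction you propose, taking $\mathcal{V}_1=\mathcal{V}_2=\mathcal{U}$ for a single codimension-one subspace $\mathcal{U}\ni I$, has a genuine gap.

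The problem is that your argument needs an invertible $V_1\in\mathcal{U}$ with $V_1\mathcal{U}\ne\mathcal{U}$, and this can simply fail for \emph{every} admissible hyperplane. Take $\mathcal{A}$ to be the upper-triangular $2\times 2$ matrices, so $\dim\mathcal{A}=3$. Any hyperplane $\mathcal{U}\ni I$ has the form $\mathrm{span}\{I,X\}$ for some $X\in\mathcal{A}$. Since every $2\times 2$ matrix satisfies its (quadratic) characteristic polynomial, $X^2\in\mathrm{span}\{I,X\}=\mathcal{U}$, so $\mathcal{U}$ is itself a (commutative) subalgebra and $V_1\mathcal{U}=\mathcal{U}$ for every invertible $V_1\in\mathcal{U}$. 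Hence $\mathcal{U}\,\mathcal{U}=\mathcal{U}\subsetneq\mathcal{A}$, and no choice of point $(V_1,V_2)$ gives tangent space larger than $\mathcal{U}$. Your ``genericity'' escape hatch does not help here: there is no hyperplane to choose. (Your final contrapositive---if $V_1\mathcal{U}=\mathcal{U}$ for all hyperplanes $\mathcal{U}\ni I$ then $V_1$ is scalar---is true, but it does not give what you need, because you must also have $V_1\in\mathcal{U}$; once you impose that, the intersection of all such $\mathcal{U}$ is $\mathrm{span}\{I,V_1\}$, and the conclusion degrades to ``$V_1^2\in\mathrm{span}\{I,V_1\}$'', which is no contradiction.)

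The paper avoids this entirely by using \emph{two different} subspaces. After observing $I\in\mathcal{W}$ (via $I=A\,p(A)$ with $p(A)=A^{-1}$ a polynomial in $A$ without constant term), it picks a basis $W_1,\dots,W_d$ of $\mathcal{W}$ and sets $\mathcal{V}_1=\mathrm{span}\{I,W_1,\dots,W_k\}$, $\mathcal{V}_2=\mathrm{span}\{I,W_{k+1},\dots,W_d\}$. Then $\mathcal{V}_1\mathcal{V}_2\subset\mathcal{W}$, and at $(I,I)$ the tangent space is $I\cdot\mathcal{V}_2+\mathcal{V}_1\cdot I=\mathcal{V}_1+\mathcal{V}_2=\mathcal{W}$, so Theorem \ref{juku} applies immediately. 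In the $2\times 2$ upper-triangular example this gives, e.g., $\mathcal{V}_1=\mathrm{span}\{I,E_{11}\}$ and $\mathcal{V}_2=\mathrm{span}\{I,E_{12}\}$, two genuinely different $2$-dimensional subspaces whose product fills $\mathcal{A}$. The moral: the point $(I,I)$ already does all the work, provided you let $\mathcal{V}_1$ and $\mathcal{V}_2$ be complementary rather than equal.

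A minor side remark: your reduction ``$A^{-1}\mathcal{A}$ is again a subalgebra'' is not justified by the reason you give (closure under multiplication and $A\in\mathcal{A}$ alone do not imply it). It happens to be true, but only because $\mathcal{A}$ is already unital---which is exactly what the paper proves first, making the reduction unnecessary.
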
 

\begin{proof} We may assume $\mathcal{W}$ is
a subalgebra 
of $\setC^{n \times n}$ over $\setC$ containing 
invertible elements. 
Observe that $\mathcal{W}$ contains the identity.
To see this, assume $A\in \mathcal{W}$ is invertible. Then
take a polynomial satisfying $p(A)=A^{-1}$.
We have $I=Ap(A)\in\mathcal{W}$.

Denote by $d$ the dimension of $\mathcal{W}$.
Take a basis $W_1, \ldots, W_d$ of $\mathcal{W}$ and 
set $\mathcal{V}_1={\rm span}\{I,W_1,W_2,\ldots,W_{k}\}$ and 
$\mathcal{V}_2=
{\rm span}\{I,W_{k+1},W_{k+2},\ldots,W_{d}\}$.
Clearly, $\mathcal{V}_1\mathcal{V}_2\subset \mathcal{W}$.
At  $(I,I)$ the rank of $\Psi$ is $d.$
\end{proof}

For a classical factorization,
any matrix $A\in \setC^{n \times n}$
is the product of two symmetric matrices such that there
are at least $n$ degrees of freedom to construct a
factorization \cite{HR}. (See also Example \ref{symme}.)
This redundancy can be reduced as follows.

\begin{proposition} Let  $\mathcal{V}_1$ 
be the set of symmetric matrices and 
$\mathcal{V}_2$ the subset of symmetric matrices having constant 
antidiagonal. Then $\setC^{n \times n}=\overline{\mathcal{V}_1\mathcal{V}_2}$.
\end{proposition}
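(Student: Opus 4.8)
The plan is to reduce everything to Theorem \ref{juku}: it suffices to exhibit one pair $(V_1,V_2)\in\mathcal{V}_1\times\mathcal{V}_2$ for which the map \eqref{mittari} vanishes. I would get this for free by choosing $(V_1,V_2)$ so that the tangent space \eqref{tangent}, $V_1\mathcal{V}_2+\mathcal{V}_1V_2$, already fills $\setC^{n\times n}$; then the orthogonal projector $\mathbf{P}_{V_1,V_2}$ is the identity, $Q_{(V_1,V_2)}$ vanishes identically, and, since the tangent space sits inside the linearization of $\mathcal{V}_1\mathcal{V}_2$ which in turn sits inside $\setC^{n\times n}$, that linearization equals $\setC^{n\times n}$. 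Theorem \ref{juku} then gives $\overline{\mathcal{V}_1\mathcal{V}_2}=\setC^{n\times n}$, which is the assertion. (The smallest cases are separate and harmless: for $n\le 2$ the requirement of a constant antidiagonal is vacuous on symmetric matrices, so $\mathcal{V}_2=\mathcal{V}_1$, and I would just cite the classical factorization of an arbitrary matrix as a product of two symmetric matrices, needing no closure at all.)

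For the witness pair I would take $V_1=D=\mathrm{diag}(d_1,\dots,d_n)$, a diagonal and hence symmetric matrix with nonzero, otherwise generic entries, and $V_2=J$, the reversal matrix (with $J_{ij}=1$ precisely when $i+j=n+1$ and $J_{ij}=0$ otherwise), which is symmetric with constant antidiagonal, so $J\in\mathcal{V}_2$. First I would record the structural facts: $\mathcal{V}_1J=\{SJ:S^{T}=S\}$ is exactly the space of persymmetric matrices $\{M:JM^{T}J=M\}$, of dimension $n(n+1)/2$; a complement to it is the "perskew" space $\{M:JM^{T}J=-M\}$, with basis $F_{ij}:=E_{ij}-E_{n+1-j,\,n+1-i}$ taken over positions with $i+j\ne n+1$, one per pair $\{(i,j),(n+1-j,n+1-i)\}$; and $\mathcal{V}_2=\setC J\oplus\mathcal{T}$, where $\mathcal{T}$ is the space of symmetric matrices with zero antidiagonal. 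Since one checks that $DJ$ is persymmetric, the equality $V_1\mathcal{V}_2+\mathcal{V}_1V_2=D\mathcal{V}_2+\mathcal{V}_1J=\setC^{n\times n}$ would follow once I verify that the perskew parts $\tfrac12(DT-JTDJ)$, $T\in\mathcal{T}$, span the whole perskew space.

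That verification is the computational heart, and I expect it to be the main obstacle. Testing on the obvious spanning set of $\mathcal{T}$: $T=E_{aa}$ (with $2a\ne n+1$) contributes $\tfrac{d_a}{2}F_{aa}$, which already accounts for every diagonal $F$-class, while $T=E_{ab}+E_{ba}$ (with $a\ne b$, $a+b\ne n+1$) contributes $\tfrac12(d_aF_{ab}+d_bF_{ba})$. The key point is that a "dual pair of index pairs" $\bigl\{\{a,b\},\{n+1-a,n+1-b\}\bigr\}$ feeds two vectors into the plane $\mathrm{span}(F_{ab},F_{ba})$, and these are linearly independent precisely when $d_ad_{n+1-a}\ne d_bd_{n+1-b}$. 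So the genericity I would impose on $D$ is exactly that the $\lceil n/2\rceil$ products $d_md_{n+1-m}$ be pairwise distinct, a nonempty open condition; it is then elementary that distinct index classes always produce such a dual pair. Granting this, a bookkeeping count ($\lfloor n/2\rfloor$ diagonal classes, plus two vectors from each of the $\tfrac12\bigl(\binom{n}{2}-\lfloor n/2\rfloor\bigr)$ dual pairs, totalling $\binom{n}{2}=\dim$ of the perskew space) together with the independence just noted shows the perskew parts span everything, closing the argument. The delicate part is getting this count and this independence exactly right while keeping the indexing consistent when $n$ is odd, where the central antidiagonal entry must be handled with a little care; everything else is soft.
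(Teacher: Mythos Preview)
Your proposal is correct and follows essentially the same approach as the paper: both pick the witness pair $(V_1,V_2)=(D,J)$ with $D$ a generic diagonal matrix, show that the tangent space $D\mathcal{V}_2+\mathcal{V}_1J$ is all of $\setC^{n\times n}$, and arrive at the identical genericity condition $d_j d_{n+1-j}\neq d_k d_{n+1-k}$ (together with $d_j\neq 0$). Your persymmetric/perskew decomposition packages the computation a bit more cleanly than the paper's direct $4\times 4$ entrywise systems and handles even and odd $n$ uniformly, whereas the paper switches to $(J,D)$ for even $n$, but these are organizational rather than substantive differences.
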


\begin{proof}  Suppose $n$ is odd and denote by $J$ the permutation matrix having
ones on its antidiagonal. 
Then, for a generic diagonal matrix $D={\rm diag}(d_1,d_2,\ldots,d_n)$,  
the rank of $\Psi$ 
at $(D,J)\in \mathcal{V}_1\times\mathcal{V}_2$ 
can be shown to equal $n^2.$ (For $n$ even, proceed similarly with
$(J,D)\in \mathcal{V}_1\times\mathcal{V}_2$.)
To see this, consider $DV_2+V_1J$. Clearly, the $(j,k)$ entry of
$V_1J$ is the $(j,n-k+1)$ entry of $V_1$. For $k\not=j$ 
and $k\not=n-j+1$
this means that
the entries in $DV_2+V_1J$ located symmetrically with respect to
the diagonal and antidiagonal are interdependent, i.e.,
the $(j,k)$, $(k,j)$, $(n-k+1,n-j+1)$ and 
$(n-j+1,n-k+1)$ entries.  To satisfy
$D\mathcal{V}_2+\mathcal{V}_1J=\setC^{n\times n}$
yields us the linear system
$$
\smat{
d_j& 0 & 1 & 0\\
d_k& 0& 0 & 1\\
0&d_{n-k+1}& 1 & 0\\
0&d_{n-j+1}&0 & 1}
\smat{
V_2(j,k)\\
V_2(n-k+1,n-j+1)\\
V_1(j,n-k+1)\\
V_1(k,n-j+1)}
=
\smat{
M(j,k)\\
M(k,j)\\
M(n-k+1,n-j+1)\\
M(n-j+1,n-k+1)}
$$
which should have a solution for any right-hand side.
This is possible if and only if 
$1-\frac{d_kd_{n-k+1}}{d_jd_{n-j+1}}\not=0$ for $j\not=k$
and $k\not=n-j+1$.
For $k=j$ and $k=n-j+1$
we obtain the condition $d_j\not=0$.
These are the genericity conditions the matrix $D$ needs to satisfy.
\end{proof}

Since $\mathcal{V}_1$ is an invertible matrix subspace,
the factorization of the proposition 
can be computed with the methods proposed in 
\cite{HR}.




\section{Conclusions}
Motivated by factorization problems, the set of products of two matrix
subspaces is studied. 
Differential geometric approach
applied to the respective smooth mapping yields a measure
of curvature for the set of products. Its vanishing
corresponds to the concept of factorizable matrix subspace. 
The notion of irreducible matrix subspace was introduced.
The LU factorization, the singular value decomposition and 
the Craig-Sakamoto theorem
served as illustrative examples how
seemingly different concepts (algorithmically at least)
can be put under the same caption.

\section*{Appendix: Hilbert Nullstellensatz}
Let $p_1,\ldots, p_k$ be complex polynomials in $n$ variables.
There is no common zero in $\setC^n$ if and only if
there are complex polynomials $q_j$ in $n$ variables satisfying
$$\sum_{j=1}^kq_jp_j=1.$$
This is the Hilbert Nullstellensatz. 
The effective Hilbert Nullstellensatz 
states that the
degrees of the $q_j$ may be assumed to satisfy
$${\rm deg}\; q_j\leq 
\left\{ \begin{array}{cc} 
D^n&  \mbox{ if }\; D\geq 3\\
2^{\min\{n,k\}}& \mbox{ if }\; D=2\\
\end{array} \right. ,$$
where $D=\max \deg p_j.$
See \cite{BR}, \cite{KO} and \cite{SO}.

\end{document}